\documentclass[11pt]{amsart}
\usepackage[utf8]{inputenc}
\usepackage{csquotes} 
\usepackage[english]{babel}
\usepackage[margin=1in]{geometry}

\usepackage{amsmath}
\usepackage{amssymb}
\usepackage{amsfonts}
\usepackage{amsthm}
\usepackage{mathtools}

\usepackage[pdfencoding=auto,hidelinks,psdextra]{hyperref}
\usepackage[capitalize,nameinlink]{cleveref}
\usepackage{xcolor}
\hypersetup{colorlinks=false}


\usepackage{appendix}
\theoremstyle{plain}
    \newtheorem{theorem}{Theorem}
    \newtheorem{lemma}[theorem]{Lemma}
    
    \newtheorem{conjecture}[theorem]{Conjecture}
    
    \Crefname{lemma}{Lemma}{Lemmas}
    \Crefname{corollary}{Corollary}{corollaries}
    \Crefname{conjecture}{Conjecture}{conjectures}
    \Crefname{proposition}{Proposition}{Propositions}
\theoremstyle{definition}

    \newtheorem*{notation}{Notation}
    
    \Crefname{definition}{Definition}{Definitions}
    \Crefname{example}{Example}{Examples}
\theoremstyle{remark}

    \Crefname{remark}{Remark}{Remarks}
    \Crefname{claim}{Claim}{Claims}

\numberwithin{equation}{section}
\numberwithin{theorem}{section}

\usepackage{mathrsfs}
\usepackage{indent first}
\usepackage{enumitem}
\usepackage{upgreek}
\usepackage{physics}
\usepackage{stmaryrd}



\DeclareMathOperator{\dist}{dist}
\newcommand{\dg}{^\circ}

\usepackage{float}
\usepackage{graphicx}
\usepackage{asymptote}
\begin{asydef}
  defaultpen(fontsize(10pt));
  usepackage("amsmath");
  usepackage("amssymb");
  void dotd(string s, pair p) {
    dot("$" + s + "$", p, dir(p));
  }
\end{asydef}
\usepackage[labelfont=bf,labelsep=period,justification=raggedright]{caption}
\usepackage{subcaption}

\author{Eyvindur A. Palsson}
\email{\textcolor{blue}{\href{mailto:palsson@vt.edu}{palsson@vt.edu}}}
\address{Department of Mathematics, Virginia Tech, Blacksburg, VA 24061}
\author{Edward Yu}
\email{\textcolor{blue}{\href{mailto:edwardyu@mit.edu}{edwardyu@mit.edu}}}
\address{Department of Mathematics, Massachusetts Institute of Technology, Cambridge, MA, 02139}

\keywords{Erd\H os distance problems, distinct triangles, optimal point configuration, congruent triangles, discrete geometry}

\begin{document}
\title{On Optimal Point Sets Determining Distinct Triangles}

\begin{abstract}
    Erd\H os and Fishburn studied the maximum number of points in the plane that span exactly $k$ distances (i.e. the set of pairwise distances between points has cardinality $k$) and classified these configurations, as an inverse problem of the Erd\H os distinct distances problem.
    We consider the analogous problem for triangles. Past work has obtained the optimal sets for one and two distinct triangles in the plane.
    In this paper, we resolve a conjecture that at most six points in the plane can span exactly three distinct triangles, and obtain the hexagon as the unique configuration that achieves this.
    We also provide evidence that optimal sets cannot be on the square lattice in the general case.
\end{abstract}

\maketitle

\section{Introduction}

A famous open problem in combinatorial geometry is Erd\H os' distinct distances problem, which asks for the minimum number of distinct distances between pairs of points in an $n$-point subset of the Euclidean plane. Erd\H os conjectured~\cite{erdos_distinct_distances} that any set of $n$ points spans $\Omega\left(\frac{n}{\sqrt{\log n}}\right)$ distinct distances; this was essentially resolved by Guth and Katz~\cite{guth_katz}, whose work implies a bound of $\Omega\left(\frac{n}{\log n}\right)$.
In higher dimensions $\mathbb R^d \; (d\geq3)$, it is conjectured that the maximum number of distinct distances is asymptotic to $n^{2/d}$, with the current best-known results due to Solymosi and Vu~\cite{solymosi_vu}.

The inverse problem to Erd\H os' distinct distances problem is to determine the maximum number of points in the plane that span only a fixed number of distinct distances. This problem was first studied by Erd\H os and Fishburn \cite{erdos_fishburn}, who found the maximum number of points in a subset of the plane spanning $k$ distances, and also determined all associated extremal configurations, for $k\leq4$. This work was extended by Shinohara~\cite{shinohara}, who determined the maximal $5$-distance sets, and Wei~\cite{wei}, who found the maximum size of $6$-distance sets, though the optimal configurations remain unknown.

In this paper, we consider the analogue of the Erd\H os-Fishburn and Erd\H os distinct-distances problems for triangles. This is a problem previously studied by Epstein et. al.~\cite{distinct_triangles_fishburn}, finding the maximal number of points in a set determining $k$ distinct triangles and characterizing the optimal configurations for $k=1,2$.
Optimal point sets for one and two distinct triangles in higher-dimensional Euclidean spaces have since been characterized~\cite{one_dist_triangle, two_dist_triangle}.

To avoid ambiguity regarding degenerate triangles, we can formalize this as counting the number of triples of distinct points in a point set, up to the equivalence relation defined by rigid plane isometries (hence, the three vertices of the triangle are allowed to be collinear, but must be distinct).
This is equivalent to characterizing a ``triangle'' by its side lengths, up to permutation.

In this paper, we extend the results of \cite{distinct_triangles_fishburn} to the $k=3$ case:
\begin{theorem}\label{thm.3-triangles}
    Any set of points in the plane that span at most $3$ distinct triangles contains at most $6$ points, and the only configuration that achieves this is the regular hexagon.
\end{theorem}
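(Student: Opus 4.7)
\medskip

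\noindent\textbf{Proof plan.} I would separate the theorem into the upper-bound statement ($|S|\leq 6$) and the uniqueness statement (the regular hexagon is the only extremal set).

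\smallskip

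\noindent\emph{Step 1: restricting the distance structure.} Any four points of $S$ span $\binom{4}{3}=4$ triangles, so having only three congruence classes available forces at least two of them to be congruent. This yields a strong local constraint on the multiset of pairwise distances emanating from each point, and my first goal would be to amplify it into a global bound on the number of distinct distances realized by $S$---conjecturally at most three once $|S|\geq 6$, which is sharp since the hexagon itself realizes three distances. With the distance set restricted, the classifications of small $k$-distance planar sets due to Erd\H os--Fishburn~\cite{erdos_fishburn} and Shinohara~\cite{shinohara} reduce the problem to a short list of candidate point configurations.

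\smallskip

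\noindent\emph{Step 2: ruling out $|S|\geq 7$.} For each candidate $n$-point $k$-distance set with $n\geq 7$ surviving Step~1, I would enumerate its triangles and check directly that at least four congruence classes appear. For instance, the regular hexagon with its centre---a natural $7$-point $3$-distance set---realizes the five triangles
$(1,1,1),\ (1,1,\sqrt 3),\ (1,1,2),\ (1,\sqrt 3,2),$ and $(\sqrt 3,\sqrt 3,\sqrt 3)$,
directly contradicting the hypothesis of at most three triangle types.

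\smallskip

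\noindent\emph{Step 3: classifying $|S|=6$.} Assume $|S|=6$ spans exactly three triangles. Fix a triple $\{a,b,c\}\subset S$ realizing some allowed triangle class $T_1$. Each remaining point $p\in S\setminus\{a,b,c\}$ produces three triangles $\{p,a,b\},\{p,a,c\},\{p,b,c\}$, each of which must fall into one of the three permitted classes; this confines $p$ to the intersection of a small number of prescribed circles centred at $a$, $b$, and $c$. Varying the base triple $\{a,b,c\}$ over different realizations of each triangle class and combining these loci with the distance bound from Step~1, one obtains a finite list of admissible six-point configurations; verifying that all but the regular hexagon span a fourth triangle class yields the uniqueness.

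\smallskip

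\noindent\emph{Main obstacle.} The bulk of the work lives in Step~3: configurations close to the hexagon (e.g.\ two concentric equilateral triangles at nonstandard relative rotation angles, or two congruent triangles glued along a common edge) are geometrically subtle and each must be ruled out by an explicit computation. Making Step~1 quantitatively sharp enough that Step~3 only has to handle a small number of distance patterns is equally delicate, since a naive bound on the distance set would leave an unmanageable number of sub-cases to check.
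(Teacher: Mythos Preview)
Your plan diverges substantially from the paper's argument, and its viability rests on a step you yourself flag as conjectural. The paper never tries to bound the number of distinct distances. Instead it fixes a diameter $AB$, uses pigeonhole to force $\{ABC\}\cong\{ABD\}$ among the four remaining points, and then does a case analysis on the quadrilateral $ACBD$ (rhombus / non-rhombus kite / non-rhombus parallelogram / isosceles trapezoid), with numerous subcases. A central device is a ``hexagon-regularizer'' lemma: if $ACEBDF$ is convex with $AB$ a diameter and $\{ABC\}\cong\{ABD\}\cong\{ABE\}\cong\{ABF\}$, then the hexagon is already regular. Several subcases feed directly into this lemma; the others are killed by exhibiting four pairwise noncongruent triangles explicitly.

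The genuine gap in your outline is Step~1. You need that a $6$-point set with three triangle classes realizes at most three distances, but you give no mechanism beyond ``amplify the local constraint,'' and it is not clear this is any easier than the theorem itself: three triangle classes only bound the distance set by the union of three side-length multisets, so a~priori up to nine values, and cutting that to three is essentially the entire content of the result. Even granting Step~1, the Erd\H os--Fishburn/Shinohara results classify \emph{maximal} $k$-distance sets, so while a $7$-point $3$-distance set would indeed be maximal and hence classified (making Step~2 work), the $6$-point $3$-distance sets needed in Step~3 are not maximal and are not supplied by that literature---you would have to classify them from scratch, which is again the whole problem. Your ``Main obstacle'' paragraph locates the difficulty in Step~3, but without a concrete argument for Step~1 the plan never gets there.
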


In addition, we consider the density of distinct triangles in the square lattice.
The square lattice gave rise to the original asymptotic conjecture for the Erd\H os distinct distance problems, while trianglular lattices have slightly fewer distinct distances~\cite{lattice_distinct_distances}; Erd\H os and Fishburn~\cite{erdos_fishburn} conjectured that optimal configurations for any sufficiently large number of distinct distances exist on the triangular lattice.

It has been shown in~\cite{distinct_triangles_2d} that an $n$-point set spans at least $\Omega(n^2)$ distinct triangles; the true constant is unknown. Epstein et. al.~\cite{distinct_triangles_fishburn} conjectured that the asymptotic is at least $\frac1{12} n^2$, which is achieved by the regular $n$-gon, but it is unknown if other optimal configurations exist. A natural place to search for such configurations are lattices. However, we show that the square lattice cannot give rise to optimal configurations.

\begin{theorem}\label{thm.triangle-grid}
    The $\sqrt n \times \sqrt n$ square grid contains between $0.1558 n^2 + o(n^2)$ and $0.1875 n^2 + o(n^2)$ distinct triangles.
\end{theorem}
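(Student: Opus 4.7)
The plan is to prove the upper and lower bounds separately by direct counting. Set $N = \sqrt n$, so the grid $G = \{0, 1, \ldots, N-1\}^2$ has $n$ points. Each triangle in $G$ corresponds, after translating one vertex to the origin, to an ordered pair $(u, v) \in \Lambda^2$ with $\Lambda = \{-(N-1), \ldots, N-1\}^2$, representing the triangle $\{0, u, v\}$. Two such pairs yield congruent triangles iff related by a planar isometry composed with a permutation of the three vertices, where the permutation action on $(u,v)$ is generated by $(u,v) \mapsto (v,u)$ and $(u,v) \mapsto (-u, v-u)$.

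For the upper bound $\tfrac{3}{16} n^2$, I would count the \emph{valid} pairs, i.e.\ those $(u, v) \in \Lambda^2$ whose triangle $\{0, u, v\}$ has bounding box fitting inside $[0, N-1]^2$ after translation. A direct density calculation shows the valid fraction tends to $(3/4)^2 = 9/16$: in each coordinate separately, the obstruction is confined to the two quadrants where $u_x$ and $v_x$ have opposite signs, each contributing a triangular defect of relative area $1/4$. Hence the number of valid pairs is $\tfrac{9}{16}|\Lambda|^2 + o(N^4) = 9n^2 + o(n^2)$. Each generic congruence class has exactly $48 = 3\cdot 2\cdot 8$ valid-pair representations (three choices of basepoint, two orderings of the remaining vertices, and eight grid-preserving isometries in $D_4$), while classes with extra self-symmetry (isosceles with grid-aligned axis, degenerate collinear, etc.) have fewer but are only $o(n^2)$ in total count—bounded e.g.\ by $\sum_k r_2(k)^2 = O(N^2 \log N)$ for the isosceles case. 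Combining yields $|\mathrm{Shapes}(G)| \le \tfrac{9n^2}{48} + o(n^2) = \tfrac{3}{16}n^2 + o(n^2)$.

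For the lower bound $0.1558 n^2$, I would construct an explicit family of pairwise non-congruent grid triangles. A natural candidate parametrizes $(u, v) \in \Lambda^2$ by a fundamental domain of the $48$-fold symmetry action—for instance, $u = (a_1, b_1)$ in the closed first octant ($0 \le b_1 \le a_1 \le N-1$) together with $v$ in a suitable region relative to $u$, subject to the bounding box being valid. A direct count yields $\Omega(n^2)$ parametrized configurations. The nontrivial step is to verify that a fraction tending to $1$ of these configurations give pairwise non-congruent triangles: specifically, that ``accidental'' congruences—distinct parameter choices yielding the same side-length multiset via non-grid isometries—form a lower-order set. The constant $0.1558$ would then emerge from optimizing the choice of fundamental domain and the quantitative bound on accidental coincidences.

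The main obstacle will be the lower bound, specifically controlling the accidental congruences. Such coincidences correspond to multiple representations of integers as sums of two squares (e.g.\ $50 = 1^2 + 7^2 = 5^2 + 5^2$), and bounding them requires analytic-number-theoretic input on moments of $r_2(k)$, the representation function. The gap between $0.1558$ and $\tfrac{3}{16} = 0.1875$ appears to reflect slack in these coincidence estimates rather than an intrinsic asymmetry of the problem, since the upper-bound argument above in fact suggests the true count is asymptotically $\tfrac{3}{16} n^2$; closing the gap would require sharpening the control on the exceptional configurations.
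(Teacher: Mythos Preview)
Your upper bound is correct and, modulo bookkeeping, equivalent to the paper's: the paper counts triangles with a vertex at the origin inside $[N]^2$, splits by whether two or three vertices lie on the bounding box, and divides by the ``minimal congruency set'' size ($4$ or $2$ respectively) to obtain $\tfrac14 N^4/2 + \tfrac14 N^4/4 = \tfrac{3}{16}N^4$. Your valid-pair count in $\Lambda^2$ divided by $48$ reaches the same constant by a cleaner symmetry argument.

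The lower bound, however, has a genuine gap. You assert that accidental congruences ``form a lower-order set,'' but this is false: they constitute a positive proportion of all triangles, which is exactly why the paper proves $0.1558$ rather than $0.1875$. The paper's key structural observation, absent from your sketch, is that two grid triangles with a common vertex at the origin are congruent outside the $48$-fold $D_4\times S_3$ orbit precisely when one is obtained from the other by rotation through an angle $\theta$ with $\cos\theta,\sin\theta\in\mathbb Q$, i.e.\ a \emph{Pythagorean angle} attached to a primitive triple $(p,q,r)$. For each such $\theta$ the rotatable points in $[N]^2$ form an arithmetic-progression sublattice of density $\approx 1/r$, so the number of triangles rotatable by $\theta$ is $\approx N^4/(2r^2)$; summing over triples gives at most $N^4\sum_{(p,q,r)\in\mathcal P} 1/(2r^2) \le 0.0633\,N^4$ rotatable triangles (with a nontrivial tail estimate for large $r$). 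Subtracting this from the upper-bound count and dividing by the minimal class sizes yields $0.1875 - 0.0633/2 \approx 0.1558$.

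Your proposed tool, moments of $r_2(k)$, controls the number of \emph{isosceles} triangles ($\sum_k r_2(k)^2 = O(N^2\log N)$, genuinely lower order), but not the rotatable ones: a scalene triangle $\{0,u,v\}$ can be rotatable whenever $u$ and $v$ are simultaneously rotatable by the same Pythagorean angle, and that event has positive density. Without the Pythagorean-triple enumeration and the associated density estimates, there is no mechanism in your outline that produces the specific constant $0.1558$, nor even any explicit constant below $0.1875$.
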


This suggests that the trend where structures with lattice symmetry have fewer distinct distances than structures with rotational symmetry is reversed for the distinct triangles problem: the regular $n$-gon has roughly half the number of distinct triangles as the square grid.

\section{Proof of \texorpdfstring{\Cref{thm.3-triangles}}{Theorem 1.1}}

\begin{notation}
    For two points $p,q$ of a subset $\mathcal P$ of the plane, we say that segment $pq$ is a \emph{diameter} of $\mathcal P$ if no other segment with both endpoints in $\mathcal P$ has length greater than that of $pq$.
    It is well-known that, by the triangle inequality, any two diameters of a point set must intersect or share an endpoint. 

    For point $P$ and line $\ell$, we use $\dist(P, \ell)$ to denote the distance from $P$ to $\ell$.
    We will let $[A_1A_2A_3]$ denote the (unsigned) area of triangle $A_1A_2A_3$.
    For distinct points $A_1, A_2, A_3$ and $B_1, B_2, B_3$, we will use $A_1A_2A_3 \cong B_1B_2B_3$ to imply that there exists a rigid plane isometry mapping $A_i$ to $B_i$ for $i=1,2,3$.
    On the other hand, we will use $\{A_1A_2A_3\} \cong \{B_1B_2B_3\}$ to denote triangle congruence ignoring vertex order---that there exists a permutation $\sigma: \{1,2,3\} \to \{1,2,3\}$ such that $A_1A_2A_3 \cong B_{\sigma(1)}B_{\sigma(2)}B_{\sigma(3)}$ for $i=1,2,3$. 
\end{notation}

We first prove a simple lemma that will be used repeatedly later on, characterizing congruent triangles sharing a side length.

\begin{lemma}\label{lem.case-splitter}
    If $A$, $B$, $C$, $D$ are four distinct points such that $\{ABC\} \cong \{ABD\}$, then either $ABC \cong ABD$ or $ABC \cong BAD$, and (at least) one of the following conditions are met:
    \begin{itemize}
        \item $D$ is the reflection of $C$ across $AB$;
        \item $D$ is the reflection of $C$ across the midpoint of $AB$;
        \item $D$ is the reflection of $C$ across the perpendicular bisector of $AB$.
    \end{itemize}
\end{lemma}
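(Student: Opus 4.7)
The plan is to reduce the unordered congruence to a case split on how the side lengths of $\{ABC\}$ can be matched with those of $\{ABD\}$. Since both triangles share the segment $AB$, the equality of multisets $\{|AB|,|BC|,|CA|\} = \{|AB|,|BD|,|DA|\}$ forces $\{|BC|,|CA|\} = \{|BD|,|DA|\}$ (removing one copy of $|AB|$ from each side). This yields two (possibly overlapping) cases: either (i) $|BC|=|BD|$ and $|CA|=|DA|$, corresponding to the ordered congruence $ABC \cong ABD$; or (ii) $|BC|=|DA|$ and $|CA|=|BD|$, corresponding to the ordered congruence $ABC \cong BAD$.

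First I would handle case (i). Here $D$ lies on the intersection of the circle centered at $B$ of radius $|BC|$ with the circle centered at $A$ of radius $|CA|$. These two circles are distinct (their centers lie on line $AB$ but they meet $C$ off this line) and intersect in exactly two points: $C$ itself and the reflection of $C$ across line $AB$. Since $D \ne C$, $D$ must be the reflection of $C$ across $AB$.

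Next I would handle case (ii). Now $D$ lies on the intersection of the circle centered at $A$ of radius $|BC|$ with the circle centered at $B$ of radius $|CA|$. One intersection point is $E$, the reflection of $C$ across the perpendicular bisector of $AB$, because that reflection swaps $A$ and $B$ and therefore sends the distances $|AC|$ and $|BC|$ to $|BE|$ and $|AE|$ respectively. The other intersection is the reflection of $E$ across line $AB$; since the composition of reflections across two perpendicular lines equals the point reflection through their common point, this second intersection is exactly the reflection of $C$ across the midpoint of $AB$. So in case (ii), $D$ is one of these two specific points.

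The only subtlety is confirming that the initial case split is exhaustive, which is immediate from the multiset equality. Overlaps between the cases (for example when $|BC|=|CA|$, or when $C$ lies on the perpendicular bisector of $AB$, so that some of the three candidate reflections of $C$ coincide) cause no trouble because the lemma only claims that at least one of the three reflection relations holds, not that exactly one does.
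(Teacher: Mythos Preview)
Your proof is correct and follows the same two-step outline as the paper (first reduce to the ordered congruences $ABC \cong ABD$ or $ABC \cong BAD$, then locate $D$ in each), but the mechanics differ in both steps. For the reduction, the paper runs through all six possible vertex orderings of the unordered congruence and checks that the four ``extra'' ones force enough equal side lengths to collapse back into one of the two main cases; your multiset-cancellation argument (remove one copy of $|AB|$ from each side) reaches the same dichotomy in one line and is cleaner. For locating $D$, the paper argues via rigid isometries---the only nonidentity isometry fixing both $A$ and $B$ is reflection in line $AB$, and the only isometries swapping $A$ and $B$ are the point reflection in the midpoint and the reflection in the perpendicular bisector---while you intersect circles. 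Your route is equally valid but needs a word more care in the degenerate situation the paper explicitly allows: if $A$, $B$, $C$ are collinear, the two circles in your case (i) are tangent at $C$ rather than meeting ``off this line'' in two points, so case (i) is vacuous there and one falls through to case (ii), where the two candidate reflections coincide and give the unique $D$. The isometry phrasing handles this automatically.
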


\begin{figure}[ht]
    \centering
    \begin{minipage}{0.5\textwidth}
        \centering
        \includegraphics[width=4cm]{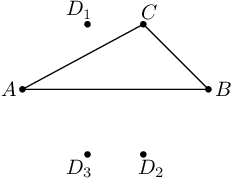}
        \caption{The possible positions of $D$ in \Cref{lem.case-splitter}.}
        \label{fig.case-splitter}
    \end{minipage}\begin{minipage}{0.5\textwidth}
        \centering
        \includegraphics[width=4cm]{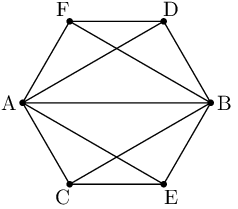}
        \caption{Labeling of hexagon vertices in proof of \Cref{lem.hexagon-regularizer}.}
        \label{fig.hexagon-regularizer}
    \end{minipage}
\end{figure}

\begin{proof}
    We first show that either $ABC \cong ABD$ or $ABC \cong BAD$.
    Casework on the true point order of the congruence:
    \begin{itemize}
        \item If $ABC \cong ABD$ or $ABC \cong BAD$, then we are already done.
        \item If $ABC \cong ADB$ then $AB=AD=AC$, so $ABC \cong ABD$.
        \item If $ABC \cong BDA$ then $AC=AB=BD$, so $ABC \cong BAD$.
        \item If $ABC \cong DAB$ then $AB=AD=BC$, so $ABC \cong BAD$.
        \item If $ABC \cong DBA$ then $AB=BD=BC$, so $ABC \cong ABD$.
    \end{itemize}

    When $ABC \cong ABD$, the only non-identity rigid isometry preserving $A$ and $B$ is the reflection over the line $AB$.
    On the other hand, when $ABC \cong BAD$, the only rigid isometries swapping $A$ and $B$ are the reflection across the midpoint of $AB$ (i.e., $180\dg$ rotation about the midpoint) and the reflection across the perpendicular bisector of $AB$. Hence $C$ and $D$ must be related via one of these three isometries.
\end{proof}

We now prove a highly useful lemma that, roughly speaking, enables us to convert information about distances and triangle congruences into a regular hexagon.

\begin{lemma}\label{lem.hexagon-regularizer}
    If $ACEBDF$ is a convex hexagon with vertices in that order (where consecutive vertices are allowed to be collinear) spanning at most three distinct triangles such that $AB$ is a diameter length and $\{ABC\} \cong \{ABD\} \cong \{ABE\} \cong \{ABF\}$, then $ACEBDF$ must be a regular hexagon.
\end{lemma}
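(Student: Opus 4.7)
The plan is to coordinatize using the diameter, exploit the symmetry guaranteed by \Cref{lem.case-splitter}, and then perform a case analysis on which pairs of triangle types can collapse. Place $A=(-1,0)$ and $B=(1,0)$ so that $|AB|=2$. Applying \Cref{lem.case-splitter} to each of the congruences $\{ABC\}\cong\{ABX\}$ for $X\in\{D,E,F\}$ forces each of $D,E,F$ to be the image of $C$ under one of the three reflections in the lemma. Since the four points $C,D,E,F$ are distinct, they must fill the entire orbit of $C$ under the Klein four-group generated by these reflections. The convex-hexagon order $ACEBDF$, which forces $C,E$ to lie on one side of line $AB$ and $D,F$ on the other, then pins down, up to relabeling,
\[C=(-s,h),\quad E=(s,h),\quad D=(s,-h),\quad F=(-s,-h),\qquad s,h>0,\]
and the diameter condition $|AB|\ge|CD|$ reduces to $s^2+h^2\le 1$ (one checks that all other pairwise distances are then automatically at most $2$).

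Next, I would use the resulting Klein-four symmetry to partition the $\binom{6}{3}=20$ triples into six congruence orbits with representatives $\{ABC\},\{ACD\},\{ACE\},\{ACF\},\{ADE\},\{CDE\}$. Writing $p=|AC|$, $q=|AE|$, $u=|CE|=2s$, $v=|CF|=2h$, and $w=|CD|=2\sqrt{s^2+h^2}$, the six orbit side-length multisets are
\[T_1=\{2,p,q\},\ T_2=\{p,q,w\},\ T_3=\{p,q,u\},\ T_4=\{p,p,v\},\ T_5=\{q,q,v\},\ T_6=\{u,v,w\}.\]
Since $s>0$ we have $p\ne q$, and so $T_4\not\cong T_5$ (they are isosceles triangles with differing leg lengths).

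The heart of the argument is to show that collapsing these six orbits into at most three distinct types forces the regular hexagon. Each candidate merger of $T_4$ or $T_5$ with another type translates to a system of two polynomial equations in $(s,h)$, and I would verify case by case that every merger of $T_4$ with $T_1$, $T_2$, or $T_6$, and every merger of $T_5$ with $T_1$, $T_2$, $T_3$, or $T_6$, reduces to one of $h=0$, $s=0$, $s=1$, or $p=2$---each impossible under $s,h>0$ and $s^2+h^2\le 1$. The only surviving possibility is $T_4\cong T_3$, equivalent to the system $p=u$ and $q=v$, which has the unique solution $(s,h)=(1/2,\sqrt{3}/2)$, precisely the regular hexagon.

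To close, if the hexagon were not regular then $T_3,T_4,T_5$ would be three pairwise distinct types (since $T_3\cong T_4$ is the regularity condition and $T_3\cong T_5$ would force $s=1$), while $T_1$ would also be distinct from each of them (merging would demand $u=2$, $p=2$, or $q=2$, each excluded by the diameter constraint). This would yield at least four distinct triangle types, contradicting the hypothesis. A direct substitution at $(s,h)=(1/2,\sqrt{3}/2)$ yields $p=u=1$, $q=v=\sqrt{3}$, $w=2$, collapsing the six orbits to exactly the three types $\{2,1,\sqrt3\}$, $\{1,1,\sqrt3\}$, $\{\sqrt3,\sqrt3,\sqrt3\}$, as required. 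The main obstacle is the multi-case bookkeeping: each merger is a short two-variable algebraic exercise, but multiset equalities involving repeated entries and the various ``boundary'' configurations where a length in $\{u,v,w\}$ coincidentally equals $p$, $q$, or $2$ demand careful enumeration to ensure no merger is overlooked.
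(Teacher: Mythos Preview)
Your argument is correct and, at its core, follows the same strategy as the paper: after using \Cref{lem.case-splitter} to pin down $C,D,E,F$ as a rectangle centered at the midpoint of $AB$, both proofs isolate the same four critical triangle types---your $T_1,T_3,T_4,T_5$ are exactly the paper's $\{ABD\},\{BCE\},\{BDE\},\{ADE\}$---and show they are pairwise noncongruent unless $T_3\cong T_4$, which forces the regular hexagon. The difference is purely in execution: the paper distinguishes these types by synthetic area comparisons (e.g.\ $\dist(E,AD)>\dist(B,AD)$) and then reads off $BE=CE$ from $\{BDE\}\cong\{BCE\}$, whereas you coordinatize and compare side-length multisets directly, solving $u=p$, $q=v$ to obtain $(s,h)=(1/2,\sqrt3/2)$.

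One remark on streamlining: your closing paragraph already contains the complete proof, and the preceding paragraph's proposed case-check on mergers of $T_4$ and $T_5$ with $T_2$ and $T_6$ is superfluous. Once you know $T_1,T_3,T_4,T_5$ are four pairwise-distinct types in the non-regular case, you have your contradiction regardless of how $T_2$ and $T_6$ behave---so the ``multi-case bookkeeping'' you flag as the main obstacle can be dropped entirely.
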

\begin{proof}
    Note that $C$, $D$, $E$, $F$ must be related as dictated by \Cref{lem.case-splitter}, so $CEDF$ is a rectangle with center at the midpoint of $AB$, with $CE \parallel AB \parallel DF$.
    Note that $AD \parallel BC$ and $E$ is on the opposite side of $BC$ as $A$, so
    $\dist(E,AD) > \dist(B,AD)$. This implies $[ADE] > [ABD]$, so $\{ADE\} \not\cong \{ABD\}$.
    Also, $\dist(E,AD) > \dist(E,BC)$, so $[ADE] > [BCE]$ which implies $\{ADE\} \not\cong \{BCE\}$.
    Since $ABFD$ is an isosceles trapezoid with $DF < AB$, we also have $\{ABD\} \not\cong \{AFD\} \cong \{BCE\}$. This means the three distinct triangles must be $\{ADE\}, \{BCE\}$, and $\{ABD\}$.

    Now, $\{BDE\} \not\cong \{ADE\}$ because they are both isosceles, but their legs $BD \neq AD$. Also $\{BDE\} \not\cong \{ABD\}$ because neither $AD$ or $AB$ can be equal in length to $BE=BD$, This means $\{BED\} \cong \{BCE\}$. This implies $BE=CE$, and it follows by a symmetry argument that the hexagon is equilateral. It also implies that $\angle EBD = \angle CEB$, and it follows that the hexagon is equiangular, so it is regular.
\end{proof}

We are now ready to prove \Cref{thm.3-triangles}. Roughly speaking, we approach the proof by repeatedly identifying congruency classes of triangles, until we either derive a contradiction or arrive at the premises of \Cref{lem.hexagon-regularizer}.

\begin{proof}[Proof of \Cref{thm.3-triangles}]
    We prove that if six distinct points in the plane span at most three congruency classes of triangles, then these six points must be a regular hexagon.
    This implies \Cref{thm.3-triangles}, since for any seven points in the plane, we can find a subset of six points that do not form a regular hexagon, and therefore span at least four congruency classes of triangles.

    Label the six points $ABCDEF$ such that $AB$ is a diameter length.
    By the Pigeonhole principle, at least two of the triangles $ABC$, $ABD$, $ABE$, $ABF$ are congruent. Without loss of generality assume $\{ABC\} \cong \{ABD\}$; by \Cref{lem.case-splitter} we have three cases:
    either $ABCD$ is an isosceles trapezoid, with bases $AB, CD$ (and $AB > CD$),
    or $ABCD$ is a kite with perpendicular diagonals $AB$ and $CD$,
    or $ABCD$ is a parallelogram with diagonals $AB$ and $CD$.

    This can be further split into four cases, which we will deal with in turn:
    \begin{enumerate}
        \item $ACBD$ is a rhombus;
        \item $ACBD$ is a kite but not a rhombus;
        \item $ACBD$ is a parallelogram but not a rhombus;
        \item $ACBD$ is an isosceles trapezoid.
    \end{enumerate}

    \emph{Case 1:} $ACBD$ is a rhombus (possibly square):
    At least one of $E,F$ is not the center of the rhombus; without loss of generality, assume it is $E$.
    Note that $\dist(E,AB)$, $\dist(E,BC)$, $\dist(E,CD)$, $\dist(E,DA)$ cannot all be distinct, else the four triangles $EAB$, $EBC$, $ECD$, $EDA$ (which have identical bases $AB = BC = CD = DA$) have different areas and are therefore pairwise noncongruent. It follows that $E$ is either on line $AB$, line $CD$, one of the external angle bisectors of the rhombus, or one of the midlines of the rhombus (as shown in the subcases below).

    \begin{figure}[ht]
        \centering
        \begin{minipage}{0.33\textwidth}
            \centering
            \includegraphics[width=4cm]{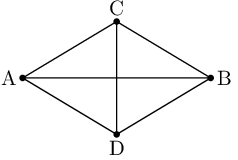}
            \caption{Case~1}
        \end{minipage}\begin{minipage}{0.33\textwidth}
            \centering
            \includegraphics[width=4cm]{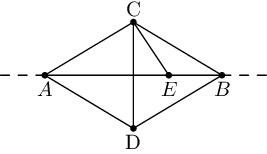}
            \caption{Subcase 1a}
        \end{minipage}\begin{minipage}{0.33\textwidth}
            \centering
            \includegraphics[width=4cm]{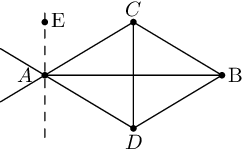}
            \caption{Subcase 1b}
        \end{minipage}
    \end{figure}

    \emph{Subcase 1a:} $E \in AB$ (and $E \in CD$ is analogous---no statements in this subcase rely on the fact that $AB$ is the diameter). We claim that triangles $\{ACE\}$, $\{CBE\}$, $\{ACD\}$, and $\{ABE\}$ are pairwise noncongruent:
    \begin{itemize}
        \item $\{ABE\}$ is not congruent to any of the other triangles because it is degenerate and the others are not.
        \item $\{CBE\} \not\cong \{ACE\}$ because if $E$ is not the center of $ACBD$ then $AEC \not\cong BEC$.
        \item $\{CBE\} \not\cong \{ACD\}$, because othwerise $CBE$ is isosceles. If $CE = EB$, then $CBE \cong CDA$, and $CD = BC = AC = AD$ implies the rhombus is composed of two equilateral triangles, so that $\angle CBE = 30\dg \neq \angle ADC = 60\dg$, contradiction; if $CB = CE$ then $E = A$, contradiction; if $CB = BE$ then its vertex angle is either $\tfrac{\angle CAD}2$ or $180\dg-\tfrac{\angle CAD}2$, while the vertex angle of $ACD$ is $\angle ACD$. Since they are equal, $\angle CAD=120\dg$, which implies the rhombus is composed of two equilateral triangles and $E=A$, contradiction.
        \item $\{ACE\} \not\cong \{ACD\}$ by the same reasoning as above.
    \end{itemize}
    Thus we have found four noncongruent triangles among the six points, contradiction.

    \emph{Subcase 1b:} $E$ lies on the external angle bisector of $A$ (and that of $B$ is analogous) is impossible because $BE > AB$, but $AB$ is a diameter.

    \begin{figure}[ht]
        \centering
        \begin{minipage}{0.5\textwidth}
            \centering
            \includegraphics[width=4cm]{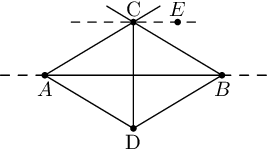}
            \caption{Subcase~1c}
        \end{minipage}\begin{minipage}{0.5\textwidth}
            \centering
            \includegraphics[width=4cm]{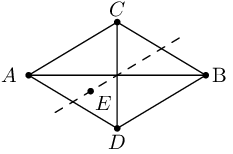}
            \caption{Subcase~1d}
        \end{minipage}
        \label{fig.case-1}
    \end{figure}

    \emph{Subcase 1c:} $E$ lies on the external angle bisector of $C$ (and that of $D$ is analogous). We may assume $CD < AB$, else this reduces to the previous case. Without loss of generality, assume $E$ and $B$ are on the same side of line $CD$.
    We claim $\{ABC\}$, $\{ACD\}$, $\{ECD\}$, $\{EAC\}$ are pairwise noncongruent:
    \begin{itemize}
        \item $\{ABC\}$, $\{ACD\}$. and $\{ECD\}$ are pairwise noncongruent because they are obtuse, acute, and right, respectively.
        \item $\{EAC\} \not\cong \{ABC\}$ because both are obtuse, while the obtuse angle $\angle ECA$ is greater than the obtuse angle $\angle ACB$.
        \item $\{EAC\} \not\cong \{ACD\}$ or $\{ECD\}$ because $\{EAC\}$ is obtuse, while $\{ACD\}$ and $\{ECD\}$ are acute and right, respectively.
    \end{itemize}
    Thus we have found four noncongruent triangles among the six points, contradiction.

    \emph{Subcase 1d:} $E$ lies on the midline parallel to $AC$ (and the midline parallel to $BC$ follows analogously). We claim $\{EBC\}$, $\{EDA\}$, $\{EAC\}$, $\{ABC\}$ are pairwise noncongruent:
    \begin{itemize}
        \item $\{EBC\} \not\cong \{EDA\}$ because $E$ is not the center of the rhombus, hence $\dist(E,BC) \neq \dist(E,AD)$, so the two triangles have different areas.
        \item $\{EBC\} \not\cong \{EAC\}$ because otherwise, by \Cref{lem.case-splitter} either $EBC \cong EAC$, implying $EB = EA$ and forcing $E$ to be the center of the rhombus, or $EBC \cong CAE$, implying $BC = AE$ and $AC = BE$, which is impossible.
        \item $\{EDA\} \not\cong \{EAC\}$ by symmetry with above.
        \item $\{EBC\} \not\cong \{ABC\}$ because by \Cref{lem.case-splitter} the only points $P$ such that $\{PBC\} \cong \{ABC\}$ are the reflection of $A$ over $BC$, which is not $E$ because $\angle ACB \geq 90\dg$, so it lies on the opposite side of line $AC$ from $E$; the reflection of $A$ over the midpoint of $BC$, which is not $E$ because it lies on line $BD$; and the reflection of $A$ over the perpendicular bisector of $BC$, which is not $E$ because it lies on line $AD$ and cannot be the midpoint of $AD$.
        \item $\{EDA\} \not\cong \{ABC\}$ by symmetry with above.
        \item $\{EAC\} \not\cong \{ABC\}$ because $\dist(E,AC) = \frac12 \dist(B,AC)$, so the two triangles have different areas.
    \end{itemize}
    Thus we have found four noncongruent triangles among the six points, contradiction.

    This completes all cases when $ACBD$ is a rhombus.

    \emph{Case 2:} $ACBD$ is a kite but not a rhombus. Then $\{ABC\}$, $\{ACD\}$, and $\{BCD\}$ are pairwise noncongruent, which implies these are the only three congruency classes of triangles.
    Without loss of generality assume $AC < BC$.

    \begin{figure}[ht]
        \centering
        \begin{minipage}{0.5\textwidth}
            \centering
            \includegraphics[width=4cm]{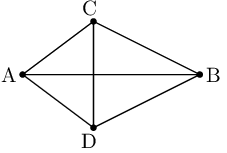}
            \caption{Subcase~2a}
        \end{minipage}\begin{minipage}{0.5\textwidth}
            \centering
            \includegraphics[width=4cm]{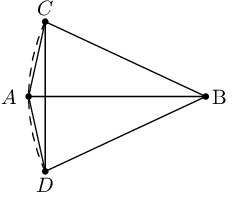}
            \caption{Subcase~2b}
        \end{minipage}
    \end{figure}

    \emph{Subcase 2a:} $CD \neq AB$ and $BC \neq AB$. Then there is only one congruency class of triangle possessing a side of length $AB$, so $\{EAB\} \cong \{FAB\} \cong \{ABC\}$, and \Cref{lem.hexagon-regularizer} implies that the six points form a regular hexagon.

    \emph{Subcase 2b:} $CD \neq AB$ and $BC = AB$. Then $A,C,D$ are equally-spaced on a circle with center $B$. The only two congruency classes of triangles possessing a side of length $AB$ are $\{ABC\}$ and $\{BCD\}$. If triangles $\{EAB\}$ and $\{FAB\}$ are congruent to $\{ABC\}$ then we are again done by \Cref{lem.hexagon-regularizer} (and this is in fact impossible, because here $BC \neq AB$). Without loss of generality assume that it is $\{ABE\}$ which is noncongruent to $\{ABC\}$, so $\{ABE\} \cong \{BCD\}$.
    Since $CD < AB$ we have $\angle CBD < 60\dg$, so $\angle CBA = \angle ABD < 30\dg$; it follows that $AC < CD < AB = BC$.
    If $ABE \cong CBD$ then without loss of generality assume that $C,A,D,E$ lie on the circle with center $B$ in that order; then $\{BCE\}$ is a fourth distinct triangle.
    On the other hand, if $ABE \cong BCD$, then without loss of generality assume $E$ is on the same side of $AB$ as $C$. Now $\{EBD\}$ is a fourth distinct triangle, because
    the only congruency class of triangle having sides of length $BE$ and $BD$ is $BCD$, and clearly $\{BCD\} \not\cong \{EBD\}$ since the latter is not isosceles and the former is.

    \begin{figure}[ht]
        \centering
        \begin{minipage}{0.5\textwidth}
            \centering
            \includegraphics[width=4cm]{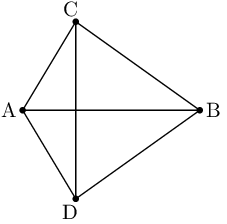}
            \caption{Subcase~2c}
        \end{minipage}\begin{minipage}{0.5\textwidth}
            \centering
            \includegraphics[width=4cm]{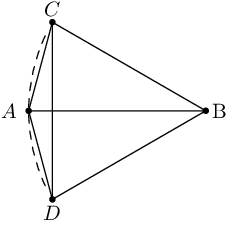}
            \caption{Subcase~2d}
        \end{minipage}
    \end{figure}

    \emph{Subcase 2c:} $CD = AB$ and $BC \neq AB$. If both triangles $\{EAB\}$ and $\{FAB\}$ are congruent to $\{ABC\}$ then we are again done by \Cref{lem.hexagon-regularizer}; else without loss of generality assume it is $\{ABE\}$ which is not congruent to $\{ABC\}$. This means either $\{EAB\} \cong \{ACD\}$ or $\{EAB\} \cong \{BCD\}$, which implies either $EAB \cong ACD$ or $EAB \cong BCD$ (since both triangles $ACD$ and $BCD$ are isosceles with base length $AB$).
    Without loss of generality, assume $E$ is on the same side of $AB$ as $C$.
    If $EAB \cong ACD$ then $EA=AC$. The only isosceles triangle with leg length $AC$ is $ACD$, but $\{EAC\} \not\cong \{ACD\}$ since their vertex angles $\angle CAE$ and $\angle CAD$ are unequal, contradiction.
    If $EAB \cong BCD$ then $EB=BC$. The only isosceles triangle with leg length $BC$ is $BCD$, but $\{EBC\} \not\cong \{BCD\}$, contradiction.

    \emph{Subcase 2d:} $CD = AB$ and $BC = AB$.
    Then $ACBD$ must be as shown, with $BCD$ equilateral and $A$ the arc midpoint of $CD$ on the circle with center $B$. Now $\{BCD\}$, $\{ACD\}$, $\{ABC\}$ are noncongruent, so $\{EAB\}$ must be congruent to one of these three triangles. We perform the finite case-check and verify that in all cases a fourth distinct triangle is produced.

    This completes all cases where $ACBD$ is a kite.

    \emph{Case 3:} $ACBD$ is a parallelogram with diagonals $AB$ and $CD$. We may assume it is not a rhombus, since otherwise we reduce to Case 1. We may also assume that no four-vertex subset of the six vertices forms a kite, since otherwise we reduce to Case 2.

    \begin{figure}[ht]
        \centering
        \begin{minipage}{0.33\textwidth}
            \centering
            \includegraphics[width=4cm]{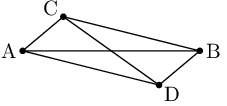}
            \caption{Case~3}
        \end{minipage}\begin{minipage}{0.33\textwidth}
            \centering
            \includegraphics[width=4cm]{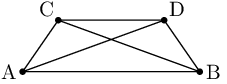}
            \caption{Subcase~4a}
        \end{minipage}\begin{minipage}{0.33\textwidth}
            \centering
            \includegraphics[width=4cm]{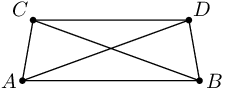}
            \caption{Subcase~4b}
        \end{minipage}
    \end{figure}

    Now, $\{EAC\} \not\cong \{EBC\}$, else some four points determine a kite by \Cref{lem.case-splitter} and we reduce to a previous case.
    Repeating this argument, $\{EAC\} \not\cong \{EBC\} \not\cong \{EBD\} \not\cong \{EAD\} \not\cong \{EAC\}$. However, since there are only three distinct triangles, some pair of these four triangles must be congruent; without loss of generality, assume the pair is $\{EAC\} \cong \{EBD\}$, which implies their areas are equal, so $E$ is on the midline of the parallelogram parallel to $AC$.
    Now if $ACBD$ is not a rectangle, then it is impossible for $\{EAC\}$ and $\{EBD\}$ to be congruent.
    So $ACBD$ is a rectangle. Note $\{ACD\} \not\cong \{ACE\}$ by area considerations, and $\{BCE\} \not\cong \{ACD\}$ since the former is isosceles and the latter is not.
    We already have $\{ACE\} \not\cong \{BCE\}$; by applying \Cref{lem.case-splitter} we also find $\{ADE\} \not\cong \{ACD\}$ and $\{ADE\} \not\cong \{ACE\}$.
    Since there are only three distinct triangles, we see $\{BCE\} \cong \{ADE\}$, which proves $E$ is the center. But then $EAC, EBC, EAB, ACD$ are pairwise noncongruent, contradiction.

    \emph{Case 4:} $ACDB$ is an isosceles trapezoid with bases $AB$ and $CD$. Without loss of generality, assume $C$ is closer to $A$ than $B$.
    If either $E$ or $F$ are the reflections of $C$ or $D$ across $AB$, then we reduce to one of the previous cases, so assume this is not the case.
    Observe that $\{ABC\} \not\cong \{ACD\}$.

    \emph{Subcase 4a:} If $BC = AD < AB$ then $\{EAB\} \not\cong \{ABC\}$, Otherwise, $E$ is either $D$, the reflection of $C$ over $AB$, or the reflection of $C$ over the midpoint of $AB$; all of these are disallowed (the latter two produce kites $ACBE$ and $ADBE$, respectively).
    Also, $\{EAB\} \not\cong \{ACD\}$, since all side lengths of the latter are shorter than $AB$. Therefore our three congruency classes are $\{EAB\}$, $\{ABC\}$, and $\{ACD\}$.
    By the same reasoning $\{FAB\} \not\cong \{ABC\}, \{ACD\}$, so $\{EAB\} \cong \{FAB\}$. If $ABEF$ is a rhombus, a non-rhombus kite, or a non-rhombus parallelogram, we reduce to Cases 1, 2, and 3 respectively, so by \Cref{lem.case-splitter} $ABEF$ is also a trapezoid, implying $\{AEF\} \not\cong \{ABE\}$.
    Also, $\{AEF\} \not\cong \{ABC\}$, else one of $AE, AF = AB$; without loss of generality assume $AE = AB$, so $AEB$ is an isosceles triangle with leg length $AB$ which is not congruent to any of the three previous triangles, giving us four distinct triangles. Thus $\{AEF\} \cong \{ACD\}$. Since $\angle ACD$ and $\angle AEF$ are both obtuse, they must correspond with each other, so $AD = AF$ and triangle $AFD$ is isosceles. Since $\{ABC\}$ and $\{ABE\}$ are not isosceles, we find $\{AFD\} = \{ACD\}$ so triangle $ACD$ is also isosceles. As $\angle ACD$ is obtuse, we must have $AC = CD$ and $FAD \cong ACD$, which implies $AC = CD = AD$. But this means $ACD$ is equilateral, implying $\angle ACD = 60\dg < 90\dg$, contradiction.

    \emph{Subcase 4b:} $BC = AD = AB$. Note that $\{EAB\}$ and $\{FAB\}$ are both noncongruent to $\{ABC\}$, since otherwise would produce a kite (by the same reasoning as in the previous subcase).

    If $\{EAB\}$ and $\{FAB\}$ are both noncongruent to $\{ACD\}$, then the three congruency classes must be $\{ABC\}$, $\{ACD\}$, and $\{EAB\} \cong \{FAB\}$. Since $\{EAB\}, \{FAB\} \not\cong \{ACD\}$ it follows that the lengths $EA=FB$ and $EB=FA$ are not equal to length $AB$, which reduces $ABEF$ to the previous subcase. Otherwise, without loss of generality, assume $\{EAB\} \cong \{ACD\}$, and that the orientation of the congruence is $EAB \cong CAD$.

    \begin{figure}[ht]
        \centering
        \begin{minipage}{0.5\textwidth}
            \centering
            \includegraphics[width=4cm]{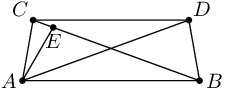}
            \caption{Subsubcase~4b(i)}
        \end{minipage}\begin{minipage}{0.5\textwidth}
            \centering
            \includegraphics[width=4cm]{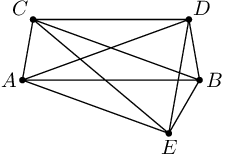}
            \caption{Subsubcase~4b(ii)}
        \end{minipage}
    \end{figure}

    \emph{Subsubcase 4b(i):} $E$ is on the same side of $AB$ as $C$ and $D$. Then $\angle EBA = \angle CDA = \angle CBA$ implies $C,B,E$ colinear, at which point the four triangles $\{BCE\}$, $\{ACE\}$, $\{ABC\}$, $\{ACD\}$ are pairwise noncongruent: $\{BCE\}$ is degenerate, $\{ACE\}$ and $\{ABC\}$ are acute isosceles triangles with different leg lengths, and $\{ACD\}$ is obtuse. We have found four triangles, contradiction.

    \emph{Subsubcase 4b(ii):} $E$ is on the opposite side of $AB$ as $C$ and $D$. Without loss of generality assume $BEA \cong BDC$; note that $EB=BD=AC$, $EA=CD$, $AB=BC=AD=CE$, and the five points are symmetric across the perpendicular bisector of $BC$.

    If $BE \neq AE$, then all lengths of triangle $BDE$ are are less than $AB$, so $\{BDE\}$ is not congruent to $\{ABC\}$ or $\{ABD\}$.
    Also, $\{DEC\}$ has no side length equal to $BD$, so $\{DEC\}$ is not congruent to $\{ABC\}$, $\{ABD\}$, or $\{BDE\}$, because it has no side length equal to $AC = BD$. This gives us four distinct triangles.

    If $BE = AE$, we have $AC=CD=DB=BE=EA$, so the pentagon is equilateral, and $DA=AB=BC=CE$. Therefore $\angle ABD = \angle BDA = \angle BCA = \angle BAC = \angle BEC$ and $\angle EBA = \angle ADC = \angle BCD = \angle BAE = \angle AEC$, which means $\angle EBD = \angle BDC = \angle DCA = \angle CAE = \angle AEB$, and the pentagon is also equiangular. This implies it is a regular pentagon.
    It is straightforward to check that one cannot add a sixth point to a regular pentagon without introducing two new distinct triangles.




    So we have exhausted all cases and are done.
\end{proof}

\section{Proof of \texorpdfstring{\Cref{thm.triangle-grid}}{Theorem 1.2}}
For ease of notation, we consider an $N \times N$ grid, so we wish to show that the number of triangles is between  $0.1558 N^4 + o(N^4)$ and $0.1875 N^4 + o(N^4)$ distinct triangles.

Let $[N] = \{0,1,\dots,N-1\}$, so the square lattice grid is $[N]\times[N]$.
Note that any triangle contained in this grid has at least one vertex as a vertex of its bounding box, so it can be mapped with suitable reflections, rotations, and translations onto a congruent triangle also contained in the grid with one vertex at the origin (see \Cref{fig.origin-shift}). This means it suffices to count the number of distinct triangles having one vertex at the origin.

\begin{figure}
    \centering
    \includegraphics[width=4cm]{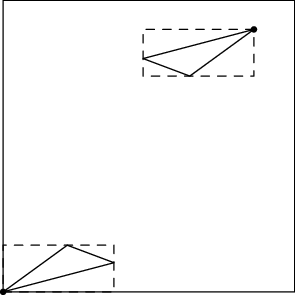}
    \caption{Lattice triangle with a congruent triangle having vertex at the origin}
    \label{fig.origin-shift}
\end{figure}

Let $O$ be the origin, $A = (a_1, a_2)$, and $B = (b_1, b_2)$.
The set of all triangles (including $OAB$) contained in $[N]\times[N]$ congruent to $OAB$ will be referred to as the \emph{congruency class} of $OAB$.
If $OAB$ is scalene and has no sides parallel to the coordinate axes, define the \emph{minimal congruency set} of $OAB$ as follows:
\begin{enumerate}
    \item If $OAB$ has two vertices on its bounding rectangle (without loss of generality assume $a_1 > b_1$ and $a_2 > b_2$), the minimal congruency set consists of the four triangles $\{(0,0),(a_1,a_2),(b_1,b_2)\}$, $\{(0,0),(a_2,a_1),(b_2,b_1)\}$, $\{(0,0),(a_1,a_2),(a_1-b_1,a_2-b_2)\}$, and $\{(0,0),(a_2,a_1),(a_2-b_2,a_1-b_1)\}$.
          This is shown in \Cref{fig.minimal-type-1}.
    \item If $OAB$ has all three vertices on its bounding rectangle, the minimal congruency set consists of the two triangles $\{(0,0),(a_1,a_2),(b_1,b_2)\}$ and $\{(0,0),(a_2,a_1),(b_2,b_1)\}$.
          This is shown in \Cref{fig.minimal-type-2}.
\end{enumerate}
Call the congruency class of $OAB$ \emph{minimal} if it contains only the triangles in the minimal congruency set.

\begin{figure}[ht]
    \centering
    \includegraphics[width=10cm]{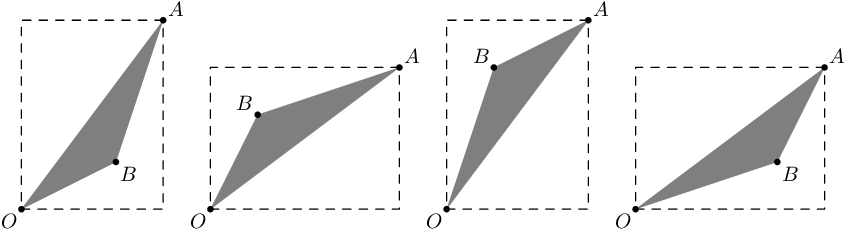}
    \caption{First type of minimal congruency class}
    \label{fig.minimal-type-1}
\end{figure}

\begin{figure}[ht]
    \centering
    \includegraphics[width=8cm]{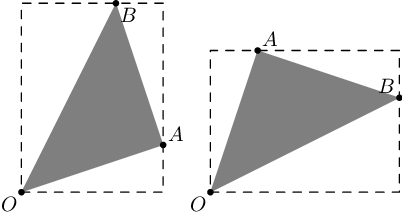}
    \caption{Second type of minimal congruency class}
    \label{fig.minimal-type-2}
\end{figure}

For angle $\theta$ and lattice point $P$, we say that $P$ is \emph{rotatable by} $\theta$ if the rotation of $P$ counterclockwise about the origin by $\theta$ is also a lattice point.
Call $P$ \emph{rotatable} if there exists $\theta \not\in\{0\dg,90\dg,180\dg,270\dg\}$ such that $P$ is rotatable by $\theta$.
Note that $P$ is rotatable by $\theta$ if and only if it is rotatable by $\theta+90\dg$, so if it is rotatable then there exists $0<\theta<90\dg$ such that $P$ is rotatable by $\theta$.

Similarly, for triangle $OAB$ and angle $\theta$, we say that $OAB$ is \emph{rotatable by} $\theta$ if $A$ and $B$ are both rotatable by $\theta$, and that $OAB$ is \emph{rotatable} if there exists $\theta \not\in\{0\dg,90\dg,180\dg,270\dg\}$ such that $A$ and $B$ are both rotatable by $\theta$.

We now show that rotatability of a triangle is linked to minimality of its congruency class:

\begin{lemma}\label{lem.rotatable-minimal}
    If a scalene non-right triangle with one vertex at the origin is not rotatable, then its congruency class is minimal.
\end{lemma}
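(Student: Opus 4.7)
The plan is to take an arbitrary triangle $\{O, P, Q\} \subset [N]\times[N]$ with $O$ as a vertex and congruent to $OAB$, and to identify the unique planar isometry $\phi$ sending $\{O, A, B\}$ to $\{O, P, Q\}$. Uniqueness holds because $OAB$ is scalene, so matching the three distinct side lengths forces a unique bijection of vertices, which extends to a unique isometry $\phi(x) = L(x) + v$ with $L \in O(2)$. Since some vertex $X \in \{O, A, B\}$ satisfies $\phi(X) = O$, the translation $v$ is determined by $L$ and the choice of $X$, leaving in principle $3 \times |O(2)|$ candidate pairs $(X, L)$.

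The crucial step is to show that $L$ must lie in the dihedral group $D_4$ of lattice-preserving orthogonal maps (the four rotations by multiples of $90\dg$ and four reflections across coordinate axes and diagonals). Suppose not: then $L = R_\theta$ or $L = R_\theta \circ F$ for some coordinate reflection $F$ and some $\theta \notin \{0\dg, 90\dg, 180\dg, 270\dg\}$. Since $\phi$ sends the other two vertices of $OAB$ into $\mathbb{Z}^2$, applying $L$ sends two of the three edge vectors $A$, $B$, $A - B$ of $OAB$ into $\mathbb{Z}^2$; by $\mathbb{Z}$-linearity of $R_\theta$ these combine to give $R_\theta(A), R_\theta(B) \in \mathbb{Z}^2$ in the pure rotation case, so $OAB$ is rotatable by $\theta$, contradicting non-rotatability. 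In the reflection case the key observation is that if $F$ is a coordinate reflection then $F$ preserves $\mathbb{Z}^2$ and satisfies $R_\theta \circ F = F \circ R_{-\theta}$; thus $L(P) \in \mathbb{Z}^2$ is equivalent to $R_{-\theta}(P) \in \mathbb{Z}^2$ for any lattice point $P$, reducing the reflection case to the rotation case with $\theta$ replaced by $-\theta$ and again contradicting non-rotatability.

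Once $L \in D_4$ is established, the proof concludes with a finite enumeration of the $3 \times 8 = 24$ pairs $(X, L)$. For Case 1 of the minimal congruency set, using the WLOG $a_1 > b_1 > 0$ and $a_2 > b_2 > 0$: when $X = O$, only $L = \mathrm{id}$ and the reflection across $y = x$ keep $\phi(A), \phi(B)$ in the non-negative quadrant, producing the first two listed triangles; when $X = A$, only $L = R_{180\dg}$ and the reflection across $y = -x$ work, using that $A - B$ has strictly positive coordinates, producing the third and fourth listed triangles; and when $X = B$, the inequalities $a_i > b_i$ force $\phi(A)$ (which equals $L(A-B)$) to have a negative coordinate for every $L$. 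The Case 2 analysis is parallel and produces exactly the two listed triangles. The main obstacle is handling the reflection sub-case of the $D_4$ reduction cleanly; once that is in place, the final enumeration is a routine bookkeeping exercise.
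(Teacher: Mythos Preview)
Your approach is essentially the same as the paper's, organized a bit more systematically. Both arguments write the congruence $\{O,A,B\}\to\{O,P,Q\}$ as an isometry $\phi=L+v$, use non-rotatability to force the linear part $L$ into the lattice-preserving group $D_4$, and then run a finite check. The paper compresses the finite check by first reflecting over $y=x$ to reduce to the orientation-preserving case and then translating so that $O\leftrightarrow O$; you instead keep all $3\times 8$ pairs $(X,L)$ and check them directly. Your reduction of the reflection case via $R_\theta\circ F = F\circ R_{-\theta}$ (with $F$ the reflection over the $x$-axis, say) is exactly equivalent to the paper's ``reflect over $y=x$'' trick, and your edge-vector argument showing $L(A),L(B)\in\mathbb Z^2$ in all three choices of $X$ is a cleaner formulation of the paper's translation step.

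One small slip in your bookkeeping: in the $X=B$ branch of Case~1 you assert that $\phi(A)=L(A-B)$ has a negative coordinate for \emph{every} $L\in D_4$, but since $A-B$ lies in the open first quadrant this fails for $L=\mathrm{id}$ and $L=F_{y=x}$. What actually rules out those two choices (and four others) is that $\phi(O)=-L(B)$ has a negative coordinate; only for $L=R_{180^\circ}$ and $L=F_{y=-x}$ is $\phi(O)$ in the closed first quadrant, and for those two it is indeed $\phi(A)$ that fails. The conclusion that $X=B$ contributes no new triangles is correct, just for a slightly different reason than you wrote.
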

\begin{proof}
    Suppose for the sake of contradiction there existed a triangle $OAB$ (in the first quadrant) which is not rotatable but also not minimal; by the latter assumption there exists another triangle $OCD$ (also in the first quadrant) such that $\{OCD\} \cong \{OAB\}$ and $OCD$ is not in $OAB$'s minimal congruency set. If triangles $\{OCD\}$ and $\{OAB\}$ are not oriented identically (i.e. the isometry mapping one to the other is a rotation rather than a glide reflection), then reflect $OCD$ over the line $y=x$ to make it oriented identically to $OAB$ while keeping it in the first quadrant.

    Consider the point order of the congruence.
    If $OCD \cong OAB$, then $OCD$ must be $OAB$, since otherwise would contradict rotatability of $OAB$. However, this contradicts the assumption that $OCD$ is not one of the triangles in $OAB$'s minimal congruency set. Similarly $ODC \cong OAB$ leads to contradiction.

    \begin{figure}[h]
        \centering
        \includegraphics[width=10cm]{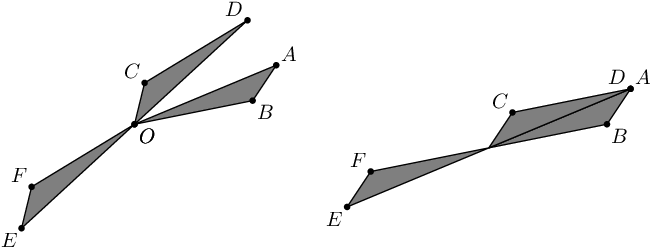}
        \caption{Positioning of triangles $OAB$, $OCD$, and $OEF$.}
        \label{fig.triangle-rotation}
    \end{figure}

    Otherwise, without loss of generality assume that $DOC \cong OAB$, then translate $DOC$ to $OEF$ as illustrated in \Cref{fig.triangle-rotation}, so $OEF \cong OAB$ implies, by the non-rotatability assumption, that $OEF$ is a rotation of $OAB$ by an integer multiple of $90\dg$.
    The translation sends $D$ to $O$ and $O$ to $E$, so $D$ and $E$ are reflections over $O$. The only rotation of $OAB$ by an integer multiple of $90\dg$ to $OEF$ such that the reflection of $E$ over $O$ is in the first quadrant is the rotation by $180\dg$, so $OEF$ is the $180\dg$ rotation of $OAB$, This implies $D=A$ and $C=A-B$ (denoting vector subtraction), which again contradicts the assumption that $OCD$ is not one of the triangles in $OAB$'s minimal congruency set.
\end{proof}

Note that if a non-origin lattice point $P$ is rotatable by $0\dg<\theta<90\dg$, then $\cos\theta$ and $\sin\theta$ must be rational, so can be expressed as $\frac pr$ and $\frac qr$ for primitive Pythagorean triple $(p,q,r)$.

\begin{lemma}\label{lem.rotatable-points-easy}
    Let $(p,q,r)$ be a primitive Pythagorean triple, and let $\theta = \arccos(q/r) = \arcsin(p/r)$.
    The number of points in $[N]\times[N]$ rotatable by $\theta$ is at most
    \[\begin{cases}
            0                    & \text{if $2N^2 < r$},           \\
            N                    & \text{if $N \leq r \leq 2N^2$}, \\
            r \lceil N/r\rceil^2 & \text{if $r < N$}.
        \end{cases}\]
\end{lemma}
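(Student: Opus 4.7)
The plan is to reformulate rotatability as a linear congruence modulo $r$, identify the set of rotatable points as a sublattice $\Lambda \subset \mathbb Z^2$ of index $r$ with minimum nonzero vector of length $\sqrt r$, and then handle the three regimes by standard lattice counting. First I would write the rotation explicitly as $R_\theta(x,y) = \tfrac{1}{r}(qx - py,\, px + qy)$, so that $(x,y)$ is rotatable iff $r \mid qx - py$ and $r \mid px + qy$. Using $p^2 + q^2 = r^2$ together with primitivity (which gives $\gcd(p,r)=\gcd(q,r)=1$), multiplying the first congruence by $q$ and the second by $p$ and subtracting shows that the two conditions are equivalent to the single linear congruence $x \equiv my \pmod r$, where $m \equiv p q^{-1} \pmod r$. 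Hence $\Lambda$ is a sublattice of $\mathbb Z^2$ of index exactly $r$. Writing $r = s^2 + t^2$ for the generating integers of the primitive Pythagorean triple (so $p = s^2 - t^2$, $q = 2st$), one checks directly that $(s,t)$ and $(-t,s)$ both satisfy $x \equiv my \pmod r$, and since their determinant $s^2 + t^2 = r$ matches the index, they form a $\mathbb Z$-basis of $\Lambda$. Therefore $\Lambda$ is a rotated square sublattice whose shortest nonzero vector has length exactly $\sqrt r$.

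With this setup, the three cases follow. In Case~1 ($2N^2 < r$), any two distinct points of $\Lambda \cap ([N]\times[N])$ would differ by a nonzero lattice vector of squared length at most $2(N-1)^2 < 2N^2 < r$, contradicting the minimum-vector bound; so $\Lambda \cap ([N]\times[N])$ contains only the origin, and the stated bound of $0$ is understood as counting nontrivial rotatable points. In Case~2 ($N \le r \le 2N^2$), for each fixed $y \in [N]$ the congruence $x \equiv my \pmod r$ admits at most one solution with $x \in [N]$, since an interval of length $N$ meets each residue class modulo $r \ge N$ at most once; summing over the $N$ values of $y$ yields the bound $N$. In Case~3 ($r < N$), the index condition $[\mathbb Z^2 : \Lambda] = r$ implies $r\mathbb Z^2 \subseteq \Lambda$, so $\Lambda$ is invariant under translation by $r\mathbb Z^2$, meaning every axis-aligned $r \times r$ block of $\mathbb Z^2$ contains exactly $r$ points of $\Lambda$; covering $[N]\times[N]$ by $\lceil N/r \rceil^2$ such blocks produces the bound $r\lceil N/r\rceil^2$.

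The main obstacle is establishing the shortest-vector bound of $\sqrt r$ used in Case~1; without it, that case collapses, and the other two cases only need the weaker fact that $\Lambda$ has index $r$. The explicit basis $(s,t),(-t,s)$ combined with the fact that its determinant already equals the index of $\Lambda$ rules out any shorter lattice vector in an elementary way. An equivalent Gaussian integer viewpoint (identifying $\Lambda$ with a principal ideal in $\mathbb Z[i]$ of norm $r$) makes this automatic, since for any nonzero Gaussian integer $\alpha$ every nonzero element $\beta$ of the ideal $(\alpha)$ satisfies $|\beta|^2 \ge |\alpha|^2$. The remaining work is a routine counting argument.
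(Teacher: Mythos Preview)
Your proof is correct and largely parallels the paper's: both reduce rotatability to the single congruence $x\equiv my\pmod r$ (with $m\equiv pq^{-1}$), and Cases~2 and~3 are handled identically via at-most-one-solution-per-row and tiling by $r\times r$ blocks.

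The only substantive difference is Case~1. The paper argues directly that a non-origin rotatable point $(a,b)$ must satisfy $r\le a^2+b^2$ by writing $\tan\theta=\tfrac{ad-bc}{ac+bd}$ for the rotated image $(c,d)$ and bounding $r\le\sqrt{(ad-bc)^2+(ac+bd)^2}=a^2+b^2$. You instead exhibit the explicit orthogonal basis $(s,t),(-t,s)$ of $\Lambda$ coming from the parametrization $r=s^2+t^2$, $p=s^2-t^2$, $q=2st$, and read off the minimum nonzero vector length $\sqrt r$ from the Gaussian-integer/ideal description. Both establish the same inequality; yours packages it structurally (and makes the later lattice periodicity in Case~3 immediate), while the paper's is a one-line trigonometric computation that avoids invoking the $(s,t)$ parametrization. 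Your remark that the bound ``$0$'' in Case~1 implicitly excludes the origin is accurate and applies equally to the paper's statement; this off-by-one is harmless for the application in the triangle count, since $\binom{1}{2}=\binom{0}{2}=0$.
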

\begin{proof}
    If $r > 2N^2$ then no points are rotatable by $\theta$, since if $(a,b) \in [N]\times[N]$ rotates to $(c,d)$ then $a^2+b^2=c^2+d^2$ and $\theta = \arctan\left(\frac{ad-bc}{ac+bd}\right)$ (see \Cref{fig.arctan-difference})
    hence $\frac pq = \frac{ad-bc}{ac+bd}$ so $r \leq \sqrt{(ad-bc)^2 + (ac+bd)^2} = \sqrt{(a^2+b^2)(c^2+d^2)} = a^2 + b^2 < 2N^2$, contradiction.

    \begin{figure}
        \centering
        \includegraphics[width=4cm]{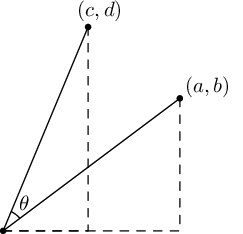}
        \caption{$(a,b)$ is rotatable by angle $\theta$, and $\tan\theta = \frac{\tfrac dc - \tfrac ba}{1 + \tfrac dc \cdot \tfrac ba} = \frac{ad-bc}{ac+bd}$.}
        \label{fig.arctan-difference}
    \end{figure}

    In the remaining cases, it will be useful to note that $(a,b)$ rotates by $\theta$ to $\left(\frac{ap-bq}r, \frac{aq+bp}r\right)$, which means $(a,b)$ is rotatable by $\theta$ if and only if $ap \equiv bq \pmod{r}$.

    The above implies that there is exactly one point rotatable by $\theta$ in any $1\times r$ subrow of the lattice.
    If $N \leq r$ then one can cover $[N]\times[N]$ in $N$ such subrows, so the number of lattice points rotatable by $\theta$ in $[N]\times[N]$ is at most $N$.
    If $r < N$ then one can cover $[N]\times[N]$ in $\lceil N/r \rceil^2$ squares of size $r$, each having $r$ points rotatable by $\theta$, so the number of points rotatable by $\theta$ is at most $r \lceil N/r \rceil^2$.
\end{proof}

We now present a refinement of the middle case of \Cref{lem.rotatable-points-easy} that holds when $r$ is sufficiently large.
\begin{lemma}\label{lem.rotatable-points-hard}
    Let $M>4$ be an arbitrarily chosen positive integer,
    let $(p,q,r)$ be a primitive Pythagorean triple, and let $\theta = \arccos(\tfrac qr) = \arcsin(\tfrac pr)$.
    If $r \geq 2M^4 N$ and $N \geq M^5$, the number of points in $[N]\times[N]$ rotatable by $\theta$ is at most $N/M$.
\end{lemma}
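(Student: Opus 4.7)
The plan is to recognize the set of rotatable points as a sublattice of $\mathbb{Z}^2$ and then bound its intersection with $[0,N]^2$ via the geometry of numbers. From the computation at the start of the proof of \Cref{lem.rotatable-points-easy}, a point $(a,b)$ is rotatable by $\theta$ if and only if $ap\equiv bq\pmod r$, so the rotatable points are exactly
\[
\Lambda = \{(a,b)\in\mathbb{Z}^2 : ap\equiv bq\pmod r\}.
\]
This is the kernel of the homomorphism $\mathbb{Z}^2\to\mathbb{Z}/r\mathbb{Z}$ sending $(a,b)$ to $ap-bq$. Since $(p,q,r)$ is primitive, any common divisor of $p$ and $r$ would divide $q$ as well, so $\gcd(p,r)=1$, the homomorphism is surjective, and $\Lambda$ has index $r$ and covolume $r$ in $\mathbb{Z}^2$.

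The crucial step, which beats the naive row-by-row bound of $N$ in \Cref{lem.rotatable-points-easy}, is the observation that every nonzero vector of $\Lambda$ has length at least $\sqrt r$. Indeed, for $(a,b)\in\Lambda$, solving $a\equiv b q p^{-1}\pmod r$ gives
\[
a^2 + b^2 \equiv b^2\cdot\frac{p^2+q^2}{p^2} = b^2\cdot\frac{r^2}{p^2} \equiv 0\pmod r,
\]
so $r\mid a^2+b^2$, forcing the first successive minimum $\lambda_1$ of $\Lambda$ to satisfy $\lambda_1\geq\sqrt r$. Minkowski's first theorem in dimension $2$ supplies the matching upper bound $\lambda_1\leq\sqrt{4r/\pi}$, so $\lambda_1 = \Theta(\sqrt r)$.

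Next, I would slice $[0,N]^2$ by the lines through points of $\Lambda$ parallel to a shortest vector $\mathbf{v}_1$. These lines are spaced $r/\lambda_1$ apart in the direction perpendicular to $\mathbf{v}_1$ (since $\mathbf{v}_1$ is primitive in $\Lambda$ and $\Lambda$ has covolume $r$), and the projection of $[0,N]^2$ onto that direction has length at most $\sqrt 2\,N$, so at most $\sqrt 2\,N\lambda_1/r + 1$ lines meet $[0,N]^2$. On each such line, the $\Lambda$-points form an arithmetic progression of common difference $\lambda_1$, giving at most $\sqrt 2\,N/\lambda_1 + 1$ points within $[0,N]^2$. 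Multiplying,
\[
\#(\Lambda\cap[0,N]^2) \leq \frac{2N^2}{r} + \frac{\sqrt 2\,N\lambda_1}{r} + \frac{\sqrt 2\,N}{\lambda_1} + 1.
\]
Substituting $\sqrt r\leq\lambda_1\leq\sqrt{4r/\pi}$ together with the hypotheses $r\geq 2M^4 N$ and $N\geq M^5$ bounds the four terms by $N/M^4$, $O(\sqrt N/M^2)$, $\sqrt N/M^2$, and $1$ respectively, and a routine calculation verifies that their sum is at most $N/M$ once $M>4$.

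The main obstacle is the divisibility argument yielding $\lambda_1\geq\sqrt r$; without it one cannot improve on the trivial bound of $N$ points (one per row). Everything else is Minkowski's theorem plus arithmetic bookkeeping on the four terms of the slicing estimate.
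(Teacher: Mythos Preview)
Your proof is correct and takes a genuinely different route from the paper's. The paper argues by contradiction: assuming more than $N/M$ rotatable points in $[N]\times[N]$, a pigeonhole on rows produces two rotatable points whose difference vector $(u,v)$ has $0<v<M$; invoking the first case of \Cref{lem.rotatable-points-easy} (with $M^2$ in place of $N$) forces $|u|>M^2$, and the bulk of the proof is an explicit, somewhat intricate partition of the rows into arithmetic progressions of step $v$ to extract the contradiction. You instead recognize the rotatable set as a rank-two sublattice $\Lambda\subset\mathbb Z^2$ of covolume $r$, prove the divisibility $r\mid a^2+b^2$ for every $(a,b)\in\Lambda$ to get $\lambda_1\geq\sqrt r$, use Minkowski for the matching $\lambda_1=O(\sqrt r)$, and finish with the standard two-directional slicing bound. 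Your argument is more conceptual and the resulting four-term estimate is cleaner; the paper's argument is more elementary in that it avoids Minkowski and any explicit lattice language, at the cost of messier bookkeeping. Both proofs rest on the same underlying fact---that $\Lambda$ has no short nonzero vectors---which the paper accesses through the $r\leq a^2+b^2$ inequality hidden in the first case of \Cref{lem.rotatable-points-easy}, and which you obtain (slightly more sharply) as a divisibility.
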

\begin{proof}
    From the observations in the proof of \Cref{lem.rotatable-points-easy},
    letting $c = p^{-1}q \pmod r$, rotatability by $\theta$ can be expressed as $a \equiv bc \pmod r$.
    Hence rotatability by $\theta$ is a linear relation, i.e. if points $S$ and $T$ are rotatable by $\theta$ then so are $uS + vT$ for any $u,v\in\mathbb Z$.
    Note that the points rotatable by $\theta$ in $[N]\times[N]$ are in the form of $(cb \bmod r, b)$ for $0\leq b < N$ and $0\leq cb\bmod r < N$.

    Since $r>N$, there is at most one point rotatable by $\theta$ per row of $[N]\times[N]$ by the above reasoning.
    Suppose there were more than $N/M$ points rotatable by $\theta$ contained in $[N]\times[N]$; then by the Pigeonhole Principle there exist $0\leq b_1 < b_2 < N$ such that $b_2 - b_1 < M$, and the rows $b_1, b_2$ both contain points rotatable by $\theta$ in $[N]\times[N]$.
    Let the vector connecting these points be $(u,v)$ with $v>0$, so we have $0<v<M$. Meanwhile by \Cref{lem.rotatable-points-easy}, since $r\geq 2M^4N > 2M^4$, no point $(a,b)$ with $a,b\leq M^2$ is rotatable by $\theta$ so $M^2 < |u| \leq N$.
    Note that $\left|\frac vu\right| < \frac{M}{M^2} = \frac1M$.

    For any $b$, consider the rows $b, b+v, \dots, b+2M^4 \left\lfloor \frac{N}{|u|} \right\rfloor v$.
    The corresponding columns for rotatable points by $\theta$ in $[r]\times[r]$ in those rows are, respectively, $\left\{a, a+u, \dots, a+2M^4\left\lfloor \frac{N}{|u|} \right\rfloor u\right\} \pmod r$ for $a \equiv cb \pmod r$.
    Note that the largest difference in the set of column indices above is $2M^4\left\lfloor \frac{N}{|u|} \right\rfloor |u| \leq 2M^4N \leq r$, so this set of lattice points ``wraps around'' the modulus $r$ at most once (see \Cref{fig.flat-vector}).

    \begin{figure}[h]
        \centering
        \includegraphics[width=10cm]{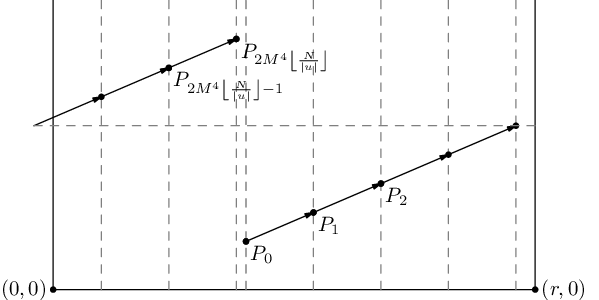}
        \caption{Rotatable points differing by a multiple of $(u,v)$, here indexed by $P_k = (a + ku \mod r,b+kv).$}
        \label{fig.flat-vector}
    \end{figure}

    After removing the last element from the set, the remaining corresponding columns
    \[\left\{a, a+u, \dots, a+\left(2M^4\left\lfloor \frac{N}{|u|} \right\rfloor -1\right) u\right\} \pmod r\]
    when reduced to residues modulo $r$ are all spaced a distance of at least $u$ from each other.
    Thus the square grid $[N]\times[N]$ can contain at most $\left\lceil \frac N{|u|} \right\rceil$ points rotatable by $\theta$ from this set, by counting columns.

    Partition the rows of $[N]\times[N]$ into sets of the form $\left\{k, k+v, \dots, k+\left(2M^4\left\lfloor \frac{N}{|u|} \right\rfloor-1\right) v\right\}$ such that each row is in at least one set.
    One can do this with at most $v \left\lceil \frac{\lceil N/v\rceil}{2M^4 \lfloor N/|u|\rfloor} \right\rceil$ sets. Since $|u| < N$, we have $\left\lfloor \frac{N}{|u|} \right\rfloor \geq 1$, so $\left\lfloor\frac{N}{|u|} \right\rfloor \geq \frac12 \cdot \frac{N}{|u|}$.
    This implies the number of sets is at most $\leq v\left(1 + \frac{1+\frac Nv}{\frac{M^4}{|u|}N}\right)$.

    We know each of these sets contains at most $\left\lceil\frac{N}{|u|}\right\rceil$ rows with points rotatable by $\theta$ in $[N]\times[N]$, so the number of points rotatable by $\theta$ in $[N]\times[N]$ is at most
    \[
        v\left(1 + \frac{1+\frac Nv}{\frac{M^4}{|u|}N}\right) \left\lceil\frac{N}{|u|}\right\rceil
        \leq
        v\left(1 + \frac{1+\frac Nv}{\frac{M^4}{|u|}N}\right) \left(1 + \frac{N}{|u|}\right).
    \]
    We now finish the calculation, recalling that $N\geq M^5$, $0<v<M$, and $M^2<|u|<N$:
    \begin{align*}
        v\left(1 + \frac{1+\frac Nv}{\frac{M^4}{|u|}N}\right) \left(1 + \frac{N}{|u|}\right)
         & = v\left(1 + \left(1+\frac{N}{|u|}\right)\left(\frac{1+\tfrac Nv}{\tfrac{M^4}{|u|}N}\right) + \frac{N}{|u|} \right)   \\
         & \leq v\left(1  + \frac{2N}{|u|} \frac{\frac{2N}v}{\frac{M^4}{|u|} N} + \frac N{M^2}\right)                            \\
         & \leq M + \frac{4N}{M^4} +v\left(\frac{N}{M^2}\right)                                                                  \\
         & \leq \frac{N}{M^3} + v\left(\frac{N}{M^2}\right) \leq  \frac{N}{M^3}  + (M-1)\left(\frac{N}{M^2}\right)\leq \frac NM,
    \end{align*}
\end{proof}

Let $\mathcal P$ denote the set of primitive Pythagorean triples, where triples $(p,q,r)$ and $(q,p,r)$ are counted separately.
\begin{lemma}\label{lem.rotatable-triangles}
    The number of rotatable triangles in $[N]\times[N]$ with one vertex at the origin is at most
    \[
        N^4 \left(\sum_{(p,q,r)\in\mathcal P} \frac1{2r^2} \right) + o(N^4)
        \leq
        0.0633 N^4 + o(N^4).
    \]
\end{lemma}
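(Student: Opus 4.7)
The plan is to count, for each primitive Pythagorean triple $(p,q,r) \in \mathcal P$ with associated angle $\theta = \arccos(q/r) \in (0\dg, 90\dg)$, the number of triangles $OAB$ with $A, B \in [N]\times[N]$ both rotatable by $\theta$, and then sum over $\mathcal P$. Since any rotatable triangle is rotatable by some such angle (as the observation in the excerpt notes, $\cos\theta, \sin\theta$ must be rational with a Pythagorean-hypotenuse denominator), a union bound gives
\[
    \#\{\text{rotatable triangles}\} \leq \sum_{(p,q,r) \in \mathcal P} \binom{n_\theta}{2},
\]
where $n_\theta$ denotes the number of points of $[N]\times[N]$ rotatable by $\theta$. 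Overcounting a triangle rotatable by multiple angles is harmless for an upper bound.

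The core of the argument partitions $\mathcal P$ into three ranges based on $r$, with $M$ a large integer parameter chosen at the end. For $r < N$, \Cref{lem.rotatable-points-easy} yields $n_\theta \leq r\lceil N/r\rceil^2 \leq N^2/r + O(N)$, so the per-triple contribution is $\binom{n_\theta}{2} \leq N^4/(2r^2) + O(N^3/r)$. For $N \leq r < 2M^4 N$, the same lemma gives $n_\theta \leq N$, and combining with the classical density estimate that the number of primitive Pythagorean triples with hypotenuse at most $R$ is $O(R)$, the total contribution from this range is $O(M^4 N^3)$. For $2M^4 N \leq r \leq 2N^2$ (and no rotatable points exist for $r > 2N^2$), \Cref{lem.rotatable-points-hard} gives $n_\theta \leq N/M$, so each triple contributes at most $N^2/(2M^2)$; since there are only $O(N^2)$ such triples, the total contribution is $O(N^4/M^2)$.

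Combining these and noting that the tail $\sum_{r \geq N} 1/r^2$ over primitive hypotenuses is $O(1/N)$, the total count is at most
\[
    N^4 \sum_{(p,q,r)\in\mathcal P} \frac{1}{2r^2} + O(N^3 \log N) + O(M^4 N^3) + O(N^4/M^2).
\]
Dividing by $N^4$, taking $\limsup_{N \to \infty}$ (which kills all but the last error), then letting $M \to \infty$, yields the asymptotic bound. For the explicit constant $0.0633$, I would evaluate $\sum 1/(2r^2)$ numerically by iterating over coprime $m > n > 0$ with $m - n$ odd (each giving a unique primitive hypotenuse $r = m^2 + n^2$), doubling to account for the ordered pairs $(p,q)$ and $(q,p)$, and truncating at sufficient precision.

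The main obstacle is the medium-to-large range of $r$: the easy bound $n_\theta \leq N$ applies throughout $r \in [N, 2N^2]$, but naively summing over the $O(N^2)$ triples in this interval would produce an unusable $\Theta(N^4)$ error. The refined \Cref{lem.rotatable-points-hard} is essential precisely because it forces the rotatable-point count below $N$ once $r$ is sufficiently larger than $N$, so that the large-$r$ contribution can be made an arbitrarily small fraction of $N^4$ by taking $M$ large.
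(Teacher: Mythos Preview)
Your proposal is correct and follows essentially the same approach as the paper: the same union bound over primitive Pythagorean angles, the same three-range split on $r$ using \Cref{lem.rotatable-points-easy} and \Cref{lem.rotatable-points-hard}, and the same numerical evaluation of $\sum 1/(2r^2)$. The only cosmetic difference is that the paper fixes $M = \lfloor N^{1/5} \rfloor$ at the outset so that all error terms are $o(N^4)$ in one stroke, whereas you keep $M$ as a free parameter and run the two-step $\limsup_{N\to\infty}$ then $M\to\infty$ argument; both are standard ways to absorb the $O(N^4/M^2)$ term.
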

\begin{proof}
    Let $N>5^5$ and $M = \lfloor \sqrt[5]{N} \rfloor$.
    We begin by upper-bounding the number of rotatable triangles in $[N]\times[N]$. This may be done by finding the number of triangles which are rotatable by $\theta$ for each $\theta$, then summing over all $\theta$.
    Take $N$ large and $M\leq \sqrt N$; use $f(\theta)$ to denote the number of points in $[N]\times[N]$ rotatable by $\theta$.

    The number of rotatable triangles contained in $[N]\times[N]$ is at most
    \[
        \sum_{0\dg<\theta<90\dg} \dbinom{f(\theta)}2
        =
        \sum_{(p,q,r) \in \mathcal P} \dbinom{f(\arctan(q/p))}2.
    \]
    By \Cref{lem.rotatable-points-easy} and \Cref{lem.rotatable-points-hard},
    \[
        \sum_{0\dg<\theta<90\dg} \dbinom{f(\theta)}2 \leq
        \sum_{\substack{(p,q,r) \in \mathcal P \\ r<N}} \dbinom{r \lceil N/r \rceil^2}2
        +
        \sum_{\substack{(p,q,r) \in \mathcal P \\ N \leq r < 2M^4 N}} \binom N2
        +
        \sum_{\substack{(p,q,r) \in \mathcal P \\ 2M^4 N \leq r \leq 2N^2}} \binom {N/M}2.
    \]
    The first summand is at most $N^4 \sum_{(p,q,r)\in\mathcal P} \frac1{2r^2} + o(N^4)$.
    Since the number of primitive Pythagorean triples with hypotenuse less than $k$, counting $(p,q,r)$ and $(q,p,r)$ separately, is $\frac k\pi + o(k)$~\cite[327-328]{pythagorean_triples_counting},
    the second summand is $o(N^4)$ and
    the third summand is at most $\frac{N^4 }{\pi M^2} + o(N^4)$.

    Thus the number of rotatable triangles is at most
    \[
        N^4 \left(\sum_{(p,q,r)\in\mathcal P} \frac1{2r^2} + \frac{1}{\pi M^2}\right) + o(N^4).
    \]
    Since $M=\lfloor \sqrt[5]{N} \rfloor$, the $N^4 \frac1{\pi M^2}$ term is $o(N^4)$.
    Hence the number of rotatable triangles in $[N]\times[N]$ is at most
    \[
        N^4 \left(\sum_{(p,q,r)\in\mathcal P} \frac1{2r^2} \right) + o(N^4).
    \]

    It remains to give the numerical upper bound on $\sum_{(p,q,r)\in\mathcal P} \frac1{2r^2}$.
    It is well-known that any primitive Pythagorean triple $(p,q,r)$ can be expressed as $(m^2-n^2, 2mn, m^2+n^2)$ or $(2mn, m^2-n^2, m^2+n^2)$ for positive integers $m$ and $n$. Since $m^2+n^2 \leq r$, we have $n \leq \sqrt r$. This means the number of primitive Pythagorean triples with hypotenuse $r$ is at most $2\sqrt r$.
    The rest is a straightfoward computer-assisted computation:
    \begin{align*}
        \sum_{(p,q,r)\in\mathcal P} \frac1{2r^2}
         & =
        \sum_{\substack{(p,q,r)\in\mathcal P         \\ r \leq 10^5}} \frac1{2r^2} + \sum_{\substack{(p,q,r) \in \mathcal P \\ r > 10^5}} \frac1{2r^2} \\
         & \leq \sum_{\substack{(p,q,r)\in\mathcal P \\ r \leq 10^5}} \frac1{2r^2} + \sum_{r=10^5}^\infty \frac{\sqrt r}{r^2} \\
         & \leq \frac12 (0.1137) + 0.0064 < 0.0633.
    \end{align*}
    where the former summand is evaluated by enumerating the finite number of triples and summing, while the latter summand is evaluated by upper-bounding with a Riemann integral.
    (Here, the index cutoff of $10^5$ was arbitrarily chosen to facilitate the computation; the constant may be reduced slightly by increasing the index cutoff and employing more computational power.)
\end{proof}

We are now ready to prove \Cref{thm.triangle-grid}. It is equivalent to prove the following:

\begin{theorem}
    The number of distinct triangles in $[N]\times[N]$ is between $0.1558 N^4 + o(N^4)$ and $0.1875 N^4 + o(N^4)$.
\end{theorem}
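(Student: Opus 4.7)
The plan is to count distinct triangles in $[N] \times [N]$ by enumerating congruency classes of \emph{origin-based} triangles, i.e., triangles $OAB$ with $O = (0,0)$ and distinct $A, B \in [N] \times [N]$; the total count is $\binom{N^2-1}{2} \sim N^4/2$. Every triangle in the grid has a congruent origin-based copy, so the number of distinct triangles equals the number of congruency classes of origin-based triangles. Standard lattice estimates bound the non-generic origin-based triangles---those that are degenerate, right, isosceles, or have an axis-parallel side---by $o(N^4)$, so generic triangles dominate.

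Next I would partition the generic origin-based triangles into type 1 and type 2 according to whether two or three vertices lie on the bounding rectangle. A direct parametrization by the bounding rectangle dimensions $(M_1, M_2)$ and the free vertex gives $|\text{type 1}| \sim N^4/4$ and $|\text{type 2}| \sim N^4/4$. By \Cref{lem.rotatable-minimal}, each non-rotatable generic triangle has its minimal congruency set as its full origin-based congruency class, containing exactly $4$ or $2$ triangles in the type 1 or type 2 case respectively.

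For the upper bound, every origin-based triangle lies in a unique minimal congruency set, and each true congruency class is the union of one or more such sets (merged by rotational symmetries of the grid). Hence the number of true classes is at most the total number of minimal sets:
\[
    \#\text{classes} \;\leq\; \frac{|\text{type 1}|}{4} + \frac{|\text{type 2}|}{2} + o(N^4) \;=\; \frac{N^4}{16} + \frac{N^4}{8} + o(N^4) \;=\; 0.1875\,N^4 + o(N^4).
\]
For the lower bound, I would bound the over-count from rotational merging. If a true class $C$ consists of $k(C) \geq 2$ minimal sets, its over-count contribution is $k(C) - 1$, and every origin-based triangle in $C$ must itself be rotatable: the minimal-set operations (reflection over $y = x$, and reflection of a non-origin vertex over the midpoint of the $O$--$A$ side) preserve rotatability, which can be verified directly from the modular characterization $ap \equiv bq \pmod r$ used in \Cref{lem.rotatable-points-easy}. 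Since $C$ contains at least $2k(C)$ origin-based triangles, the per-triangle over-count is at most $\tfrac{k(C)-1}{2k(C)} < \tfrac{1}{2}$. Summing and applying \Cref{lem.rotatable-triangles}, the total over-count is at most $\tfrac{1}{2} \cdot 0.0633\,N^4 + o(N^4) = 0.0317\,N^4 + o(N^4)$, giving the lower bound $0.1875\,N^4 - 0.0317\,N^4 + o(N^4) = 0.1558\,N^4 + o(N^4)$.

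The main obstacle is establishing that rotatability is preserved by the minimal-set operations, since that is what legitimizes expressing the over-count in terms of the rotatable-triangle count bounded by \Cref{lem.rotatable-triangles}; the verification is a short modular-arithmetic computation that hinges on $\gcd(p,r) = \gcd(q,r) = 1$ and the relation $p^2+q^2 \equiv 0 \pmod r$. The rest of the argument is routine asymptotic lattice counting together with the explicit numerical evaluation of the constants $\tfrac{3}{16}$ and $\tfrac{3}{16} - \tfrac{1}{2}(0.0633)$.
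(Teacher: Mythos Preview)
Your proposal is correct and takes essentially the same approach as the paper: reduce to origin-based triangles, show generic triangles dominate and split evenly into the two bounding-box types ($\sim N^4/4$ each), then combine \Cref{lem.rotatable-minimal} and \Cref{lem.rotatable-triangles} to get $\tfrac{3}{16}N^4$ as the upper bound and $\tfrac{3}{16}N^4 - \tfrac{1}{2}(0.0633)N^4$ as the lower bound. The only cosmetic difference is that the paper phrases the lower bound as a direct count of non-rotatable minimal classes rather than your ``over-count'' subtraction, and note that your stated ``main obstacle'' is unnecessary: the contrapositive of \Cref{lem.rotatable-minimal} already says that any generic origin-based triangle whose congruency class is non-minimal (i.e.\ lies in a class with $k(C)\ge 2$) must be rotatable, so you never need to verify that the minimal-set operations preserve rotatability.
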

\begin{proof}
    As noted before, it suffices to count the number of congruency classes of triangles with at least one vertex at the origin.

    The total number of such triangles is $\binom{N^2-1}2 = \frac12 N^4 + o(N^4)$.
    Since any line contains at most $N$ points inside $[N]\times[N]$, the number of right triangles and isosceles triangles are both $o(N^4)$, so the number of scalene non-right triangles is $\frac12 N^4 + o(N^4)$.

    \begin{figure}[h]
        \centering
        \includegraphics[width=4cm]{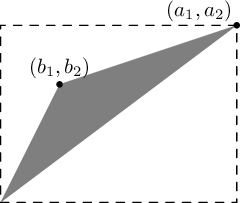}
        \hspace{0.1\textwidth}
        \includegraphics[width=5.25cm]{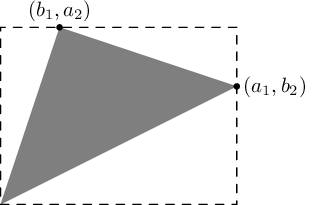}
        \caption{Pairing of triangles having two and three vertices on bounding box.}
        \label{fig.bounding-box-pair}
    \end{figure}

    Furthermore, consider pairing triangles $\{(0,0), (a_1,a_2), (b_1,b_2)\}$ and $\{(0,0), (a_,b_2), (a_2,b_1)\}$ (see \Cref{fig.bounding-box-pair}). This pairs the scalene non-right triangles with three vertices on its bounding box and the scalene non-right triangles with two vertices on its bounding box, except for $o(N^4)$ exceptions (where the pair of a scalene non-right triangle is paired with an isosceles or a right triangle). This implies that the number of scalene non-right triangles with three vertices on its bounding box and the number of scalene non-right triangles with two vertices on its bounding box are identical up to a constant difference of $o(N^4)$.
    Hence the number of each is $\frac14 N^4 + o(N^4)$.

    Let $A$ be the number of rotatable triangles in $[N]\times[N]$ having three vertices on its bounding box, and $B$ be the number of rotatable triangles $[N]\times[N]$ having two vertices on its bounding box. By \Cref{lem.rotatable-triangles}, $A+B \leq 0.0633 N^4 + o(N^4)$.

    The number of non-rotatable scalene non-right triangles having three vertices on its bounding box is therefore $0.25 N^4 - A + o(N^4)$, and the number of non-rotatable scalene non-right triangles having two vertices on its bounding box is $0.25 N^4 - B + o(N^4)$.

    By \Cref{lem.rotatable-minimal}, non-rotatable scalene non-right triangles have minimal congruency classes. The minimal congruency set of scalene non-right triangles with three vertices on its bounding box consists of exactly $2$ triangles, each with three vertices on their bounding boxes; the minimal congruency set of scalene non-right triangles with two vertices on its bounding box consists of exactly $4$ triangles, each with two vertices on their bounding boxes.
    Hence the number of congruency classes is at least
    \[
        \frac{0.25 N^4 - A + o(N^4)}2 + \frac{0.25 N^4 - B + o(N^4)}4
        \geq 0.1875 N^4 - \frac{A+B}2 + o(N^4),
    \]
    which is greater than $0.1558 N^4 + o(N^4)$ as desired.

    Meanwhile, since the congruency class of any scalene non-right triangle contains at least the triangles given by its minimal congruency set,
    we have the number of congruency classes total is at most
    \[
        \frac{0.25 N^4 + o(N^4)}2 + \frac{0.25 N^4 + o(N^4)}4 = 0.1875 N^4 + o(N^4).
    \]
\end{proof}

\section{Conjectures}

We present several natural conjectures based on this work.
Firstly, observing that the regular heptagon has $4$ distinct triangles, we conjecture that this is optimal:

\begin{conjecture}
    The maximum number of points in the plane spanning four distinct triangles is $7$, achieved only by the regular heptagon.
\end{conjecture}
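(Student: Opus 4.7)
The plan is to extend the diameter-based case analysis used in \Cref{thm.3-triangles} to eight points spanning at most four triangle classes. Assuming for contradiction such a configuration exists, I would label the points so that $AB$ is a diameter and consider the six triangles $\{ABC_i\}$ formed with the remaining six points $C_1, \ldots, C_6$. By pigeonhole, either some triangle class appears at least three times among these six, or two or more classes each appear at least twice. \Cref{lem.case-splitter} then places the relevant pair(s) of third vertices into one of three explicit symmetric positions with respect to $AB$: reflections across $AB$, across its midpoint, or across its perpendicular bisector.

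First, I would develop a heptagon-analog of \Cref{lem.hexagon-regularizer}: a rigidity statement showing that if a sufficient number of the triangles $\{ABC_i\}$ are drawn from a small set of classes each containing a diameter-length side, then the points must form a subset of a regular heptagon with diameter $AB$. This analog is more delicate than \Cref{lem.hexagon-regularizer} because the regular heptagon lacks $180\dg$ rotational symmetry, so its diameter $AB$ is not symmetrically placed between two antipodal vertices; the key observation is that exactly one heptagon vertex lies on the perpendicular bisector of each diameter, and the other four non-diameter vertices come in two pairs related by reflection across that bisector. Once such a lemma is in hand, one verifies that adjoining any eighth point to a regular heptagon produces at least a fifth triangle class.

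Second, I would carry out the analog of the Case 1--4 breakdown from the proof of \Cref{thm.3-triangles}. For each pair $C_i, C_j$ with $\{ABC_i\} \cong \{ABC_j\}$, classify $AC_iBC_j$ as a rhombus, non-rhombus kite, non-rhombus parallelogram, or isosceles trapezoid, and analyze how the additional points $C_k$ interact with the configuration. In most subcases I expect to extract a fifth triangle class directly using arguments parallel to those in Cases 1--3 of \Cref{thm.3-triangles}; the remaining subcases should collapse into the heptagon-regularizer scenario above. A useful secondary reduction is that if two or more diameter-length edges appear among the eight points, one can re-apply the analysis with a different choice of diameter to stack additional congruence constraints.

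The main obstacle is combinatorial explosion. The proof of \Cref{thm.3-triangles} already comprises four main cases with extensive nested subcases; adding one more triangle class and two more free points substantially increases the branching factor. The absence of central symmetry in the heptagon also means the analog of \Cref{lem.hexagon-regularizer} is harder to state cleanly, and the near-regular-pentagon subsubcase appearing in Case 4 will likely reappear here in more elaborate variants. I expect the hardest step to be the endgame in which a fifth triangle class must be extracted from a configuration that almost-but-not-quite matches the heptagon; a more refined invariant---perhaps counting diameter-length edges, isosceles subtriangles, or vertices on particular symmetry lines---may be required to close these cases efficiently.
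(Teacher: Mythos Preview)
The statement you are attempting to prove is presented in the paper as a \emph{conjecture}, not a theorem; the paper offers no proof or even a proof sketch for it, only the observation that the regular heptagon spans exactly four distinct triangles. There is therefore no argument in the paper to compare your proposal against.

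As for the proposal itself: what you have written is a strategic outline, not a proof. You correctly identify that the diameter-and-pigeonhole framework of \Cref{thm.3-triangles} is the natural starting point, and your heptagon-analog of \Cref{lem.hexagon-regularizer} is a sensible target. But you also explicitly acknowledge the main gap yourself: the combinatorial explosion in the case analysis is not handled, the rigidity lemma is not stated precisely (let alone proved), and the ``endgame'' subcases where a near-heptagon must be ruled out are left open. In particular, the key structural fact that makes \Cref{lem.hexagon-regularizer} work---that four congruent triangles on a diameter force a centrally symmetric rectangle $CEDF$---has no clean analog for the heptagon, and your proposal does not supply a replacement mechanism. Until those pieces are filled in, this remains a plausible plan of attack on an open problem rather than a proof.
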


In light of work done in $1$ and $2$ distinct triangles in higher dimensions~\cite{one_dist_triangle,two_dist_triangle}, we also suspect the maximum number of points in $\mathbb R^3$ spanning three distinct triangles may be $8$, e.g. is achievable by the vertices of a cube.

It would be interesting to determine the true constant $c$ for which the $n$-point square lattice determines $cn^2 + o(n^2)$ distinct triangles.

In addition, we have numerical evidence suggesting that the equilateral triangular lattice has more triangles than the square lattice: note that in the $N\times N$ triangular lattice, by a similar translational argument to the square lattice, it suffices to count triangles with at least one vertex on either a $60\dg$ or a $120\dg$ corner of the lattice. We used a computer program to count the number of distinct triangles $T_N$ in such a grid, and computed $\frac{T_N}{N^4}$ for $1 \leq N \leq 250$; a curvefit suggests that a triangular lattice of $n$ points spans approximately $0.2n^2$ distinct triangles.
Thus, we would like to also exclude the triangular lattice from optimal configurations.
We more strongly conjecture that no two-dimensional lattice has asymptotically fewer distinct triangles than the square lattice:
\begin{conjecture}
    Let $c_0$ be the real number such that the $n$-point square lattice spans $c_0 n^2 + o(n^2)$ distinct triangles.
    Then for any distinct nonzero vectors $a,b \in \mathbb R^2$, the $n$-point lattice generated by $a$ and $b$, i.e. $\left\{ua + vb \mid 0 \leq u,v < \sqrt n, u,v \in \mathbb Z\right\}$, spans at least $c_0 n^2 + o(n^2)$ distinct triangles.
\end{conjecture}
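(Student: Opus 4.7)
The plan is to adapt the framework of the proof of \Cref{thm.triangle-grid} to a general lattice $\Lambda = \{ua+vb : 0\leq u,v<\sqrt n,\ u,v\in\mathbb Z\}$ by classifying $\Lambda$ into one of the five two-dimensional Bravais types (oblique, rectangular, rhombic, hexagonal, or square) and handling each case. The central heuristic is that the square lattice's minimal congruency set sizes of $2$ and $4$ arise specifically from its diagonal reflection $y \leftrightarrow x$ symmetry; lattices without such a symmetry should yield smaller minimal congruency sets and hence strictly more distinct triangles, not fewer.

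The first step is to extend the setup to arbitrary lattices. Define the bounding parallelogram of a triple $\{O,A,B\}\subset\Lambda$ as the smallest translate of $\{sa+tb : 0\leq s\leq P_1,\ 0\leq t\leq P_2\}$ with $P_1,P_2\in\mathbb Z_{\geq 0}$ containing all three points, and anchor each triangle at a corner of this parallelogram, splitting into 2-box versus 3-box types. The $180^\circ$ rotation about the center of the bounding parallelogram is a lattice symmetry for any $\Lambda$ (its center has half-integer coordinates in the $\{a,b\}$ basis), so the ``anchor-swap'' argument persists and contributes a factor of $2$ to the minimal congruency set for 2-box triangles. For a generic oblique lattice, whose point group is just $\{\pm I\}$, the minimal set sizes are thus $2$ and $1$, half of those in the square case. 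Applying the basis-change map $L:\Lambda\to\mathbb Z^2$ to extend \Cref{lem.rotatable-minimal} and \Cref{lem.rotatable-triangles}, and invoking the crystallographic restriction to conclude that no non-trivial rotations of an oblique lattice exist, one obtains that the rotatable-triangle count is $o(n^2)$ and hence
\[
    \frac{\tfrac14 n^2}{1} + \frac{\tfrac14 n^2}{2} + o(n^2) = \frac{3}{8}n^2 + o(n^2)
\]
distinct triangles for any oblique lattice, comfortably exceeding $c_0 n^2 \leq \tfrac{3}{16}n^2$.

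The remaining four Bravais types are where additional lattice symmetries shrink the count and require individual treatment. The rhombic case ($|a|=|b|$, non-square) most closely mirrors the square lattice: the diagonal reflection $a \leftrightarrow b$ preserves the first ``quadrant'' of $\Lambda$ and doubles the minimal set sizes to $4$ and $2$, while the Pythagorean-style rotatability estimates transfer directly through $L$. The rectangular case (non-square, non-rhombic) admits only axis reflections that map the first quadrant outside itself, so its minimal set sizes are as in the oblique case. The main obstacle is the hexagonal lattice, whose $6$-fold rotational symmetry renders \emph{every} lattice point rotatable and breaks \Cref{lem.rotatable-minimal} outright; here the minimal congruency sets must be enumerated directly under the hexagonal point group (expected sizes $6$ or $12$), and the rotatability count re-derived using Eisenstein integers $\mathbb Z[\omega]$ and Loeschian norms in place of Gaussian integers and Pythagorean triples. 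The numerical evidence cited earlier in the paper, approximately $0.2n^2$ for the triangular lattice, suggests the final bound should exceed $c_0$ with room to spare; making this rigorous and unifying the bounds across all five Bravais classes into the uniform lower bound of $c_0 n^2 + o(n^2)$ is the principal technical challenge.
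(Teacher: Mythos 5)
You should note first that the statement you are addressing appears in the paper as a \emph{conjecture}: the paper offers no proof of it, so there is nothing to compare your argument against and it must stand entirely on its own. As written it does not. It is a plan rather than a proof: by your own admission the hexagonal case is left open (a numerical curvefit suggesting $\approx 0.2n^2$ is evidence, not an argument), the Eisenstein-integer analogue of the rotatability machinery is only named, and the ``principal technical challenge'' of unifying the five cases is deferred.

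Beyond incompleteness, there is a substantive error in how you handle the oblique and rectangular cases. You invoke the crystallographic restriction to conclude that such lattices contribute $o(n^2)$ rotatable triangles, but that restriction constrains the \emph{point group} of the lattice --- rotations mapping the whole lattice to itself --- whereas the paper's notion of rotatability is pointwise: $P$ is rotatable by $\theta$ if $R_\theta P$ happens to land in the lattice, and such $\theta$ need not be lattice symmetries. This is precisely why the paper needs \Cref{lem.rotatable-points-easy}, \Cref{lem.rotatable-points-hard}, and \Cref{lem.rotatable-triangles} at all: $\mathbb Z^2$ has point group $D_4$, yet a positive proportion of its triangles are rotatable via Pythagorean angles. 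The same phenomenon occurs whenever the Gram matrix is proportional to a rational matrix; for instance the rectangular lattice generated by $(1,0)$ and $(0,\sqrt5)$ has quadratic form $u^2+5v^2$, which represents $9$ by both $(3,0)$ and $(2,1)$, so it has rotatable points despite having no nontrivial rotational symmetry. Consequently each Bravais class requires an analogue of the $\sum_{(p,q,r)\in\mathcal P}\frac1{2r^2}$ estimate for the representation numbers of the binary form $|ua+vb|^2$, and there is no a priori reason the resulting constant is small enough to keep the lower bound above $c_0$. Relatedly, rotatability is a metric notion and does not ``transfer directly through $L$'' as you assert in the rhombic case, since the basis-change map $L$ is not an isometry. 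The Bravais-class decomposition and the observation that fewer point-group symmetries shrink the minimal congruency sets are sensible starting points, but the arithmetic heart of the problem --- bounding rotatable triangles for an arbitrary positive-definite binary form --- is untouched.
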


The investigation of non-rectangular lattice point sets could also be relevant to the discussion of this problem. For instance, one might consider the hexagonal grid formed by the triangular lattice, or sets of points formed by taking the intersection of a disk with any lattice. The authors believe this analysis will be more technically challenging, but may yield interesting results in the way of lower constant factors on the number of distinct triangles.

\section*{Acknowledgements}

This paper represents the results from a 2023 Polymath Jr. project, supported by NSF award DMS-2313292. We would like to thank Prof. Adam Sheffer and the Polymath Jr. staff for their support of the program; we are additionally grateful to Prof. Adam Sheffer for inspiring conversations about these problems. Palsson was supported in part by the Simons Foundation grant \#360560, and would also like to thank the Vietnam Institute for Advanced Study in Mathematics (VIASM) for the hospitality and for the excellent working condition, while part of this work was done.


\;

\end{document}